\numberwithin{equation}{section}
\newtheorem{thm}{Theorem}[section]
\newtheorem{cor}[thm]{Corollary}
\newtheorem{lem}[thm]{Lemma}
\newtheorem*{thm*}{Theorem}
\theoremstyle{remark}
\newtheorem{rem}[thm]{Remark}
\newtheorem{opq}[thm]{Problem}
\theoremstyle{definition}
\newtheorem*{dfn*}{Definition}
\DeclareMathOperator{\D}{d}
\DeclareMathOperator{\dess}{{\mathsf{Des}}}
\DeclareMathOperator{\dzii}{{\mathsf{Chi}}}
\DeclareMathOperator{\koo}{{\mathsf{root}}}
\DeclareMathOperator{\paa}{{\mathsf{par}}}
\newcommand*{\ascr}{\mathscr A}
\newcommand*{\at}[1]{\mathsf{At}(#1)}
\newcommand*{\borel}[1]{{\mathfrak B}(#1)}
\newcommand*{\cbb}{\mathbb C}
\newcommand*{\co}{\EuScript{CO}}
\newcommand*{\des}[1]{{\dess(#1)}}
\newcommand*{\deso}[1]{{\dess(#1)^{\circ}}}
\newcommand*{\dz}[1]{{\EuScript D}(#1)}
\newcommand*{\dzi}[1]{\dzii(#1)}
\newcommand*{\dzin}[2]{\dzii^{\langle#1\rangle}(#2)}
\newcommand*{\Ge}{\geqslant}
\newcommand*{\hh}{\mathcal H}
\newcommand*{\kk}{\mathcal K}
\newcommand*{\lambdab}{{\boldsymbol\lambda}}
\newcommand*{\Le}{\leqslant}
\newcommand*{\nbb}{\mathbb N}
\newcommand*{\pa}[1]{\paa(#1)}
\newcommand*{\pscr}[1]{\EuScript{P}_{#1}(\rbb_+)}
\newcommand*{\rbb}{\mathbb R}
\newcommand*{\slam}{S_{\boldsymbol \lambda}}
\newcommand*{\smalloplus}{\raise0pt\hbox{$\scriptscriptstyle \oplus$}}
\newcommand*{\tcal}{{\mathscr T}}
\newcommand*{\ws}{\EuScript{WS}}
\newcommand*{\xbf}{\boldsymbol{x}}
\newcommand*{\xsc}{\mathscr X}
\newcommand*{\zbb}{\mathbb Z}
\begin{document}
   \title[Subnormal weighted shifts whose $n$th powers have trivial domain]{Subnormal weighted shifts on directed trees \\ whose $n$th powers have trivial domain}
   \author[P.\ Budzy\'{n}ski]{Piotr Budzy\'{n}ski}
   \address{Katedra Zastosowa\'{n} Matematyki,
Uniwersytet Rolniczy w Krakowie, ul.\ Balicka 253c,
PL-30198 Krak\'ow, Poland}
\email{piotr.budzynski@ur.krakow.pl}
   \author[Z.\ J.\ Jab{\l}o\'nski]{Zenon Jan
Jab{\l}o\'nski}
   \address{Instytut Matematyki,
Uniwersytet Jagiello\'nski, ul.\ \L ojasiewicza 6,
PL-30348 Kra\-k\'ow, Poland}
\email{Zenon.Jablonski@im.uj.edu.pl}
   \author[I.\ B.\ Jung]{Il Bong Jung}
   \address{Department of Mathematics,
Kyungpook National University, Daegu 702-701,
Korea}
   \email{ibjung@knu.ac.kr}
   \author[J.\ Stochel]{Jan Stochel}
\address{Instytut Matematyki, Uniwersytet
Jagiello\'nski, ul.\ \L ojasiewicza 6, PL-30348
Kra\-k\'ow, Poland} \email{Jan.Stochel@im.uj.edu.pl}
   \thanks{The research of the first author was
supported by the NCN (National Science Center), decision No. DEC-2011/01/D/ST1/05805. The research of the second and fourth authors was supported by the NCN (National Science Center), decision No. DEC-2013/11/B/ST1/03613. The research of the third author was supported by the National Research Foundation of Korea (NRF) grant funded by the Korea government (MSIP) (No. 2009-0083521).}
   \subjclass[2010]{Primary 47B20, 47B37; Secondary
47B33}
   \keywords{Subnormal operator, power of an
operator, trivial domain, weighted shift on a
directed tree, composition operator in an $L^2$-space}
   \begin{abstract}
It is shown that for any positive integer $n$ there
exists a subnormal weighted shift on a directed tree
whose $n$th power is closed and densely defined while its $(n+1)$th power has trivial domain. Similar result for composition operators in $L^2$-spaces is established.
   \end{abstract}
   \maketitle
   \section{Introduction}
In 1940 Naimark gave a remarkable example of a closed
symmetric operator whose square has trivial domain
(cf.\ \cite{nai}). In 1983 Chernoff published a short example of a semibounded closed symmetric operator whose square has trivial domain (cf.\ \cite{cher}). In the same year  Schm\"{u}dgen found out another pathological behaviour of domains of powers of closed symmetric operators related to density with respect to graph norms (cf.\ \cite{Schm}). It turns out that Naimark's phenomenon can never happen in some concrete classes of operators. Among them are the class $\co$ of composition operators in $L^2$-spaces and the class $\ws$ of weighted shifts on directed trees. The reason for this is that symmetric operators in these classes are automatically bounded (see \cite{j-j-s3,b-j-j-sS}).

The class $\co$ has been attracting attention of a
considerable number of researchers since at least late
1950's. We refer the reader to \cite{sin-man} and
\cite{b-j-j-sC} for more information on bounded and
unbounded operators in the class $\co$, respectively.
The class $\ws$ was introduced in \cite{j-j-s} and has
been intensively studied since then (see e.g.,
\cite{b-j-j-sA,j-j-s2,b-j-j-sB,j-j-s3}). It
substantially generalizes the class of (unilateral and
bilateral) weighted shifts in $\ell^2$-spaces. It is
also related to the class of operators investigated by
Carlson in \cite{Ca1,Ca2}. Unbounded weighted shifts
on directed trees proved to have very interesting
features which make them desirable candidates for
testing hypothesises and constructing examples (see
e.g., \cite{j-j-s,j-j-s2,j-j-s4,b-d-j-s,Trep}). This
is due to the fact that the interplay between graph
theory and operator theory makes the class $\ws$ more
flexible.

The above raises the question of whether the square,
or a higher power, of an operator in the class $\ws$
or $\co$ has trivial domain. Clearly, such an operator
must be nonsymmetric. The question becomes interesting
and highly nontrivial when the operator under
consideration is assumed to be subnormal (recall that
symmetric operators are subnormal, cf.\ \cite[Theorem
1 in Appendix I.2]{a-g}). One of the reasons for this
is that quasinormal operators which are particular
instances of subnormal operators have all powers
densely defined (cf.\ \cite[Proposition 5]{StSz1}). On
the other hand, formally normal
operators\footnote{Formally normal operators are
natural generalizations of symmetric operators. In
general, they are not subnormal (cf.\ \cite{Codi}).}
belonging to the class $\ws$ or $\co$ are
automatically normal (cf.\ \cite[Proposition
3.1]{j-j-s3} and \cite[Theorem 9.4]{b-j-j-sC}), and as
such have all powers densely defined. Some attempts to
tackle our question have been undertaken in
\cite{j-j-s4,Bu} where the case of hyponormal
operators in both classes $\ws$ and $\co$ was solved.
Recently, it has been shown that for every positive
integer $n$ there exists an injective subnormal
operator in the class $\ws$ whose $n$th power is
densely defined while its $(n+1)$th power is not; the
same is true for $\co$ (cf.\ \cite{b-d-j-s}). These
examples are built over the simplest possible directed
trees which admit such operators.

In view of the above discussion, the following
problem arises (the case of $n=1$ appeared already in
\cite{j-j-s4}):
   \begin{opq}  \label{Q}
Is it true that for every integer $n \Ge 1$, there
exists a subnormal weighted shift on a
directed tree whose $n$th power is densely defined and the domain of its $(n+1)$th power is trivial?
   \end{opq}
In the present paper we solve Problem \ref{Q} affirmatively (cf.\ Theorem \ref{main}).  A similar problem can be stated for composition operators in $L^2$-spaces. We solve it  affirmatively as well (cf.\ Corollary \ref{sq5}).
   \section{Preliminaries}
First, we introduce some notation and terminology. In
what follows $\zbb_+$, $\nbb$, $\rbb_+$ and $\cbb$
stand for the sets of nonnegative integers, positive
integers, nonnegative real numbers and complex
numbers, respectively. For $n\in \nbb$, we denote by
$\nbb^n$ the $n$-fold Cartesian product of $\nbb$ with
itself. We set $J_n = \{k \in \nbb\colon k \Le n\}$
for $n\in \nbb$. We write $\borel{\rbb_+}$ for the
$\sigma$-algebra of all Borel subsets of $\rbb_+$.
Given $\vartheta \in \rbb_+$, we denote by
$\pscr{\vartheta}$ the set of all Borel probability
measures on $\rbb_+$ whose closed
supports\footnote{\;Recall that a finite Borel measure
on $\rbb_+$ is regular and as such has a closed
support.} are contained in $[\vartheta,\infty)$, and
by $\delta_{\vartheta}$ the measure in
$\pscr{\vartheta}$ concentrated on the one-point set
$\{\vartheta\}$ (all measures considered in this paper
are positive). The notation $\bigsqcup$ is reserved to
denote pairwise disjoint union of sets.

The following auxiliary lemma concerning moments is
stated without proof. Here and later,
$\int_a^{\infty}$ means integration over the closed
interval $[a, \infty)$ on the real~ line.
   \begin{lem} \label{zespol}
Suppose $\mu$ is a finite Borel measure on $\rbb_+$
such that $\int_0^\infty s^{n} \D \mu(s) < \infty$ for
some $n \in \nbb$. Then $\int_0^\infty s^{k} \D \mu(s)
< \infty$ for every $k \in \nbb$ such that $k \Le n$.
   \end{lem}
The domain of an operator $A$ in a complex Hilbert
space $\hh$ is denoted by $\dz{A}$ (all operators
considered in this paper are linear). Recall that a
closed densely defined operator $A$ in $\hh$ is said
to be {\em normal} if $AA^* = A^*A$, where $A^*$ stands for the adjoint of $A$ (see
\cite{b-s,Schm2,Weid} for more on this class of
operators). We say that a densely defined operator $A$
in $\hh$ is {\em subnormal} if there exists a complex
Hilbert space $\kk$ and a normal operator $N$ in $\kk$
such that $\hh \subseteq \kk$ (isometric embedding)
and $Ah = Nh$ for all $h \in \dz{S}$. We refer the
reader to \cite{Con} and
\cite{StSz3,StSz1,StSz4,StSz2} for the foundations of
the theory of bounded and unbounded subnormal
operators, respectively.

Let $\tcal=(V,E)$ be a directed tree, where $V$ and
$E$ stand for the sets of {\em vertices} and {\em
edges} of $\tcal$, respectively. Set $\dzi u = \{v\in
V\colon (u,v)\in E\}$ for $u \in V$. Denote by $\paa$
the partial function from $V$ to $V$ which assigns to
a vertex $u\in V$ its parent $\pa{u}$ (i.e.\ a unique
$v \in V$ such that $(v,u)\in E$). A vertex $u \in V$
which has no parent is called a {\em root} of $\tcal$;
if it exists, it is unique and denoted by $\koo$. Set
$V^\circ=V \setminus \{\koo\}$ if $\tcal$ has a root;
otherwise, we put $V^\circ=V$. If $W \subseteq V$, we
set $\dzi{W }= \bigcup_{v \in W} \dzi v$, $\dzin{0}{W}
= W$ and $\dzin{n+1}{W} = \dzi{\dzin{n}{W}}$ for every
$n\in \zbb_+$. Given $u\in V$, we put $\dzin n{u} =
\dzin n{\{u\}}$ and $\des{u}= \bigcup_{n=0}^\infty
\dzin n{u}$. Since $(\des{u}, E \cap (\des{u}\times
\des{u}))$ is a subtree of $\tcal$, we see that
$\deso{u}=\des{u} \setminus \{u\}$ for all $u \in V$.
We say that $\tcal$ is {\em extremal} if $\dzi{u}$ is
countably infinite for every $u \in V$. It is easily
seen that up to isomorphism of graphs, there are
exactly two extremal directed trees, one with root,
the other without.

Denote by $\ell^2(V)$ the Hilbert space of square
summable complex functions on $V$ with standard
inner product. Given $u \in V$, we write $e_u$
for the characteristic function of the one-point
set $\{u\}$. Clearly, the system $\{e_u\}_{u\in
V}$ is an orthonormal basis of $\ell^2(V)$. For
$\lambdab=\{\lambda_v\}_{v \in V^{\circ}}
\subseteq \cbb$, the operator $\slam$ in
$\ell^2(V)$ defined by
   \begin{align*} \begin{aligned}
\dz {\slam} & = \{f \in \ell^2(V) \colon
\varLambda_\tcal f \in \ell^2(V)\},
   \\
\slam f & = \varLambda_\tcal f, \quad f \in
\dz{\slam},
\end{aligned}
\end{align*}
where $\varLambda_\tcal$ is the mapping defined
on functions $f\colon V \to \cbb$ via
   \begin{align*}
(\varLambda_\tcal f) (v) =
   \begin{cases}
\lambda_v \cdot f\big(\pa v\big) & \text{ if }
v\in V^\circ,
   \\
0 & \text{ if } v=\koo,
   \end{cases}
   \end{align*}
is called a {\em weighted shift on} $\tcal$ with
weights $\lambdab$. Recall that unilateral or
bilateral weighted shifts are instances of weighted
shifts on directed trees. We refer the reader to
\cite{j-j-s} for basic facts about directed trees and
weighted shifts on directed trees needed in this
paper.

Below we state a criterion for subnormality of
weighted shifts on countably infinite directed trees.
It is an extension, in a sense, of \cite[Theorem
5.1.1]{b-j-j-sA} to the case of weighted shifts on
directed trees whose $C^\infty$ vectors are not
necessarily dense in the underlying space. This
criterion helps us to solve Problem \ref{Q}.
   \begin{thm}[\mbox{\cite[Theorem 3]{b-d-j-s}}]
   \label{wsi} Let $\slam$ be a weighted shift on a
countably infinite directed tree $\tcal=(V,E)$ with
weights $\lambdab=\{\lambda_v\}_{v \in V^{\circ}}$.
Suppose there exist a family $\{\mu_v\}_{v \in V}$ of
Borel probability measures on $\rbb_+$ and a family
$\{\varepsilon_v\}_{v\in V}$ of nonnegative real
numbers such that\/\footnote{\;We adopt the
conventions that $0\cdot \infty = \infty \cdot 0 = 0$,
$\frac{1}{0} = \infty$ and $\sum_{v\in \varnothing}
\xi_v = 0$.}
   \begin{align} \label{consist6}
\mu_u(\varDelta) = \sum_{v \in \dzi{u}} |\lambda_v|^2
\int_\varDelta \frac{1}{s} \D \mu_v(s) + \varepsilon_u
\delta_0(\varDelta), \quad \varDelta \in
\borel{\rbb_+}, \, u \in V.
   \end{align}
Then the following two assertions hold\/{\em :}
\begin{enumerate}
\item[(i)] if $\slam$ is densely defined, then
$\slam$ is subnormal,
\item[(ii)] if $n\in \nbb$, then $\slam^n$
is densely defined if and only if $\int_0^\infty s^n
\, \D \mu_u(s) < \infty$ for every $u \in V$ such that
$\dzi{u}$ has at least two vertices.
   \end{enumerate}
   \end{thm}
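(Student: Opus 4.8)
The plan is to read \eqref{consist6} as a statement about moments. Integrating $s^n$ against both sides and using that $\int_0^\infty s^n \D\delta_0(s) = 0$ for $n\Ge 1$, the singular term drops out and one gets the recursion $\int_0^\infty s^n \D\mu_u(s) = \sum_{v\in\dzi u}|\lambda_v|^2\int_0^\infty s^{n-1}\D\mu_v(s)$, where Lemma \ref{zespol} guarantees that each lower moment appearing here is finite whenever the top one is, so every term makes sense. Iterating this down the tree and using $\mu_v(\rbb_+)=1$ collapses it to $\int_0^\infty s^n\D\mu_u(s) = \sum_{v\in\dzin n u}|\lambda_{v|u}|^2 = \|\slam^n e_u\|^2$, where $\lambda_{v|u}$ is the product of weights along the unique path from $u$ to $v$. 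Thus the $n$th moment of $\mu_u$ is exactly the squared norm of $\slam^n e_u$.

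Next I would record $\dz{\slam^n}$ explicitly. Computing $\|\slam^n f\|^2$ in the orthonormal basis, and using that the sets $\dzin n u$ partition the vertices admitting an $n$th ancestor, gives $\|\slam^n f\|^2 = \sum_{u\in V}|f(u)|^2\int_0^\infty s^n\D\mu_u(s)$ for finitely supported $f$. The one subtlety is that membership in $\dz{\slam^n}$ a priori also requires the intermediate vectors $\slam^k f$ to lie in $\dz{\slam}$; this is handled by the moment interpolation inequality $\int_0^\infty s^k\D\mu_u(s)\Le\big(\int_0^\infty s^n\D\mu_u(s)\big)^{k/n}$ (valid since each $\mu_u$ is a probability measure) followed by H\"older's inequality, which shows that the single condition $\sum_u|f(u)|^2\int_0^\infty s^n\D\mu_u<\infty$ already forces all intermediate ones. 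Hence $\dz{\slam^n}=\{f\in\ell^2(V)\colon \sum_{u\in V}|f(u)|^2\int_0^\infty s^n\D\mu_u(s)<\infty\}$.

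Part (ii) then follows quickly. With $F=\{u\in V\colon\int_0^\infty s^n\D\mu_u(s)=\infty\}$, the domain formula shows $\dz{\slam^n}$ contains every finitely supported function vanishing on $F$ and is contained in the functions vanishing on $F$, so $\overline{\dz{\slam^n}}=\ell^2(V\setminus F)$ and $\slam^n$ is densely defined iff $F=\varnothing$, i.e.\ iff the $n$th moment is finite at \emph{every} $u$. It remains to reduce ``every $u$'' to ``every branching $u$'', which I would do by induction on $n$ using the recursion of the first step. The case $n=1$ is immediate, since $\int_0^\infty s\D\mu_u=\sum_{v\in\dzi u}|\lambda_v|^2$ is finite for branching $u$ by hypothesis and trivial when $u$ has at most one child. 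For the step, finiteness of the $n$th moment at branching vertices yields, via interpolation, finiteness of the $(n-1)$st there, whence by the inductive hypothesis the $(n-1)$st moment is finite at all vertices; feeding this into $\int s^n\D\mu_u=\sum_{v\in\dzi u}|\lambda_v|^2\int s^{n-1}\D\mu_v$ settles the branching case by hypothesis, the one-child case by a single finite term, and the leaf case trivially.

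The substantial part is (i). The difficulty is that in the situations this theorem is meant to cover the $C^\infty$-vectors of $\slam$ need not be dense---indeed $\bigcap_n\dz{\slam^n}$ may be $\{0\}$---so the usual moment and positive-definiteness criteria, which require a dense invariant domain on which all powers act, are unavailable; this is the main obstacle. The plan is instead to construct a normal extension by hand, adapting the construction used in the bounded, $C^\infty$-dense setting of \cite[Theorem 5.1.1]{b-j-j-sA}. Reading each $\mu_u$ as the radial part of a rotation-invariant measure on $\cbb$, I would assemble a Hilbert space $\kk\supseteq\ell^2(V)$ as an $L^2$-space built from the $\mu_u$ over the branches of $\tcal$ paired with the spectral variable, and take $N$ to be multiplication by that variable, which is patently normal. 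The content of \eqref{consist6} is precisely the consistency identity making the assignment of each $e_u$ to the normalized monomial attached to the path ending at $u$ an isometry of $\ell^2(V)$ into $\kk$ that intertwines $\slam$ with $N$ on $\dz{\slam}$---the factor $\frac{1}{s}$ and the defect term $\varepsilon_u\delta_0$ accounting, respectively, for the pushforward under multiplication and for the part of $e_u$ orthogonal to the range. The delicate points to verify are that $N$ is genuinely normal (not merely formally normal) and that the embedding is well defined and isometric with no appeal to density of $C^\infty$-vectors; once these hold, $\hh\subseteq\kk$ with $N$ extending $\slam$ yields subnormality directly from the definition.
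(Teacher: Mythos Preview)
The paper does not prove this theorem: it is quoted from \cite[Theorem~3]{b-d-j-s} and used as a black box, so there is no in-paper argument to compare your proposal against. That said, your outline for (ii) is correct and matches the machinery the present paper cites elsewhere---the identity $\|\slam^n e_u\|^2=\int_0^\infty s^n\D\mu_u(s)$ and the description of $\dz{\slam^n}$ are precisely what \cite[Lemmata~2.3.1(i) and 4.2.2(i)]{b-j-j-sA} and \cite[Theorem~3.2.2]{j-j-s2} supply (and what the proof of Lemma~\ref{ddn} here invokes); your H\"older/interpolation step handling the intermediate powers and your inductive reduction from ``all $u$'' to ``branching $u$'' are both sound.

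For (i) you have correctly identified both the obstacle (absence of a dense invariant core of $C^\infty$-vectors rules out the standard moment/positive-definiteness criteria) and the intended remedy (build an explicit $L^2$-model for a normal extension, following the pattern of \cite[Theorem~5.1.1]{b-j-j-sA}). But what you have written is a plan, not a proof: the actual construction of $\kk$, the verification that the multiplication operator is normal rather than merely formally normal, and the check that the embedding is isometric and intertwines $\slam$ with $N$ on all of $\dz{\slam}$ are exactly the places where the work lies, and you have not carried them out. To complete this you would need to consult \cite{b-d-j-s} directly.
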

   \begin{rem}
Note that if $w \in V^{\circ}$, $\lambda_w \neq 0$ and
equality in \eqref{consist6} holds for $u\in \{w,
\pa{w}\}$, then $\varepsilon_w = 0$. Indeed,
substituting $\varDelta=\{0\}$ into \eqref{consist6}
with $u=\pa{w}$, we deduce that $\mu_w(\{0\})=0$. As a
consequence, we see that \eqref{consist6} yields
$\mu_v(\{0\})=0$ for every $v \in V^{\circ}$ such that
$\lambda_v \neq 0$. Hence, applying the same procedure
to $u=w$ gives $\varepsilon_w = 0$. This implies that
if all the weights $\{\lambda_v \colon v \in
V^{\circ}\}$ are nonzero, then condition
\eqref{consist6} takes the following simplified form
   \begin{align*}
\mu_u(\varDelta) =
   \begin{cases}
\displaystyle{ \sum_{v \in \dzi{u}} |\lambda_v|^2
\int_\varDelta \frac{1}{s} \D \mu_v(s)} & \text{ if }
u \in V^{\circ},
   \\[3ex]
\displaystyle{\sum_{v \in \dzi{\koo}} |\lambda_v|^2
\int_\varDelta \frac{1}{s} \D \mu_v(s) +
\varepsilon_{\koo} \delta_0(\varDelta)} & \text{ if }
u = \koo,
   \end{cases}
\quad \varDelta \in \borel{\rbb_+}.
   \end{align*}
   \end{rem}
   The following lemma will be used in the proof of
the main theorem.
   \begin{lem} \label{ddn}
Let $\slam$ be a weighted shift on a directed
tree $\tcal=(V,E)$ with weights
$\lambdab=\{\lambda_v\}_{v \in V^{\circ}}$ and
let $n\in \nbb$. Then the following two
conditions are equivalent{\em :}
   \begin{enumerate}
   \item[(i)] $\dz{\slam^n} = \{0\}$,
   \item[(ii)] $e_u\notin \dz{\slam^n}$ for every $u\in
V$.
   \end{enumerate}
Moreover, if there exist a family $\{\mu_v\}_{v \in
V}$ of Borel probability measures on $\rbb_+$ and a
family $\{\varepsilon_v\}_{v\in V} \subseteq \rbb_+$
which satisfy \eqref{consist6}, then {\em (i)} is
equivalent to
   \begin{enumerate}
   \item[(iii)] $\int_0^\infty s^n \D \mu_u(s) = \infty$
for every $u \in V$.
   \end{enumerate}
   \end{lem}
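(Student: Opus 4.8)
The plan is to reduce all three conditions to the finiteness of the single scalar quantity $\|\varLambda_\tcal^k e_u\|^2$. First I would record the action of powers of $\slam$ on the basis: for each $u \in V$ and $k \in \nbb$,
\begin{align} \label{pyth0}
\varLambda_\tcal^k e_u = \sum_{w \in \dzin k u} \lambda_{w|k}\, e_w,
\end{align}
where $\lambda_{w|k}$ is the product of the weights along the unique directed path of length $k$ from $u$ to $w$, so that $\|\varLambda_\tcal^k e_u\|^2 = \sum_{w \in \dzin k u} |\lambda_{w|k}|^2 \in [0,\infty]$. Since in a directed tree each $w$ has at most one ancestor $\pan k w$, the sets $\{\dzin k u\}_{u \in V}$ are pairwise disjoint, and grouping the terms of $\|\varLambda_\tcal^k f\|^2$ according to $u = \pan k w$ gives the Pythagorean identity
\begin{align} \label{pyth}
\|\varLambda_\tcal^k f\|^2 = \sum_{u \in V} |f(u)|^2\, \|\varLambda_\tcal^k e_u\|^2 \in [0,\infty], \quad f \in \ell^2(V).
\end{align}
Finally, directly from the definition of $\slam$ one checks by induction that $f \in \dz{\slam^n}$ if and only if $\varLambda_\tcal^k f \in \ell^2(V)$ for every $k \in \{1,\dots,n\}$; in particular $e_u \in \dz{\slam^n}$ if and only if $\|\varLambda_\tcal^k e_u\|^2 < \infty$ for all $k \Le n$.

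With this in hand, (i)$\Leftrightarrow$(ii) is immediate. The implication (i)$\Rightarrow$(ii) holds because each $e_u$ is nonzero. For (ii)$\Rightarrow$(i), if $f \in \dz{\slam^n}$ and $f(u_0) \ne 0$ for some $u_0$, then \eqref{pyth} gives $|f(u_0)|^2\, \|\varLambda_\tcal^k e_{u_0}\|^2 \Le \|\varLambda_\tcal^k f\|^2 < \infty$ for every $k \Le n$, whence $\|\varLambda_\tcal^k e_{u_0}\|^2 < \infty$ and $e_{u_0} \in \dz{\slam^n}$, contradicting (ii); so $\dz{\slam^n} = \{0\}$.

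For (i)$\Leftrightarrow$(iii) I would first establish, under \eqref{consist6}, the moment identity
\begin{align} \label{mom}
\int_0^\infty s^k \D\mu_u(s) = \|\varLambda_\tcal^k e_u\|^2, \quad u \in V,\ k \in \nbb,
\end{align}
by induction on $k$. Integrating \eqref{consist6} against $s^k$ annihilates the atom $\varepsilon_u \delta_0$ (as $k \Ge 1$) and, via the identity $\int_0^\infty s^k \tfrac 1 s \D\mu_v(s) = \int_0^\infty s^{k-1} \D\mu_v(s)$ (immediate for $k \Ge 2$, and valid for $k=1$ thanks to the Remark, as discussed below), expresses the $k$th moment of $\mu_u$ as $\sum_{v \in \dzi u} |\lambda_v|^2 \int_0^\infty s^{k-1} \D\mu_v(s)$; combined with \eqref{pyth0} and $\mu_v(\rbb_+) = 1$ this yields \eqref{mom}. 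Granting \eqref{mom}, condition (iii) says $\|\varLambda_\tcal^n e_u\|^2 = \infty$ for all $u$, so $e_u \notin \dz{\slam^n}$ for every $u$, i.e. (ii), which gives (iii)$\Rightarrow$(i). Conversely, assuming (i) we have (ii); if $\int_0^\infty s^n \D\mu_u(s) < \infty$ for some $u$, then Lemma \ref{zespol} forces $\int_0^\infty s^k \D\mu_u(s) < \infty$ for all $k \Le n$, so by \eqref{mom} $\|\varLambda_\tcal^k e_u\|^2 < \infty$ for all $k \Le n$ and $e_u \in \dz{\slam^n}$ --- contradicting (ii). Hence every $n$th moment is infinite, which is (iii).

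The main obstacle is the bookkeeping in \eqref{mom}: the convention $\tfrac 1 0 = \infty$, the factor $\tfrac 1 s$ in \eqref{consist6}, and a possible atom of $\mu_v$ at $0$ interact only in the passage from $k=0$ to $k=1$, and it is exactly the Remark --- which guarantees $\mu_v(\{0\}) = 0$ whenever $\lambda_v \ne 0$ (the term being absent when $\lambda_v = 0$) --- that makes that step go through. The other subtle point, and the precise reason Lemma \ref{zespol} is needed, is that $e_u \in \dz{\slam^n}$ demands finiteness of all the intermediate moments $\int_0^\infty s^k \D\mu_u$, $k \Le n$, not merely of the $n$th one.
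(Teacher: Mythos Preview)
Your proof is correct and follows exactly the same strategy as the paper's: the Pythagorean identity \eqref{pyth} handles (i)$\Leftrightarrow$(ii), and the moment identity \eqref{mom} together with Lemma~\ref{zespol} handles (ii)$\Leftrightarrow$(iii). The only difference is that where the paper simply cites \cite[Theorem 3.2.2(ii)]{j-j-s2} for the former and \cite[Lemmata 2.3.1(i) and 4.2.2(i)]{b-j-j-sA} for the latter, you have unpacked those results and given a self-contained argument (including the careful use of the Remark to handle the atom at $0$ in the $k=1$ step).
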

   \begin{proof}
(i)$\Rightarrow$(ii) Evident.

(ii)$\Rightarrow$(i) Suppose that, contrary to
our claim, there exists $f\in \dz{\slam^n}$ such
that $f \neq 0$. Then $f(u) \neq 0$ for some $u
\in V$. In view of \cite[Theorem
3.2.2(ii)]{j-j-s2}, this implies that $e_u \in
\dz{\slam^n}$, a contradiction.

(ii)$\Leftrightarrow$(iii) Apply Lemma
\ref{zespol} and \cite[Lemmata 2.3.1(i) and
4.2.2(i)]{b-j-j-sA}.
   \end{proof}
   \section{The main theorem}
We begin by recalling that if there exists a weighted
shift $\slam$ on a directed tree $\tcal$ with nonzero
weights such that
   \begin{align} \label{cykada}
\text{$\slam$ is densely defined and $\dz{\slam^2}=\{0\}$,}
   \end{align}
then the directed tree $\tcal$ is extremal (cf.\
\cite[Theorem 3.1]{j-j-s4}). As shown in \cite[Theorem
3.1]{j-j-s4}, each extremal directed tree admits a
hyponormal weighted shift $\slam$ with nonzero weights
that satisfies \eqref{cykada}. Hence, to solve Problem
\ref{Q} affirmatively we may assume that the directed
tree in question is extremal.

   The following theorem is the main result of the
present paper. It solves Problem~ \ref{Q}
affirmatively. The proof of Theorem \ref{main} is
given in Section \ref{proofm}.
   \begin{thm} \label{main}
Suppose $\tcal=(V,E)$ is an extremal directed tree and
$n\in \nbb$. Then there exists a subnormal weighted
shift $\slam$ on $\tcal$ with nonzero weights such
that $\slam^n$ is densely defined and
$\dz{\slam^{n+1}}=\{0\}$.
   \end{thm}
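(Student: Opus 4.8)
The plan is to realize the desired shift through the subnormality criterion of Theorem \ref{wsi}. I will construct a family $\{\mu_u\}_{u\in V}$ of Borel probability measures on $\rbb_+$, all with closed support contained in $[1,\infty)$, together with weights $\lambdab$, so that \eqref{consist6} holds with every $\varepsilon_u=0$ and every $\lambda_v\neq 0$, and so that $\int_0^\infty s^n\,\D\mu_u(s)<\infty$ while $\int_0^\infty s^{n+1}\,\D\mu_u(s)=\infty$ for every $u\in V$. Granting this, the conclusion follows immediately: by Lemma \ref{zespol} the first moments are finite, so Theorem \ref{wsi}(ii) (applied with exponents $1$ and $n$, noting that in an extremal tree every $\dzi{u}$ is infinite and hence the moment test is active at every vertex) shows that $\slam$ and $\slam^n$ are densely defined; Theorem \ref{wsi}(i) then makes $\slam$ subnormal; and Lemma \ref{ddn}(iii), applied with $n+1$ in place of $n$, gives $\dz{\slam^{n+1}}=\{0\}$.

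Since all supports lie in $[1,\infty)$ we have $\mu_u(\{0\})=0$, and \eqref{consist6} with $\varepsilon_u=0$ is equivalent, as an identity of measures, to $s\,\D\mu_u(s)=\sum_{v\in\dzi{u}}|\lambda_v|^2\,\D\mu_v(s)$. Writing $\sigma_u$ for the finite measure $\D\sigma_u(s)=s\,\D\mu_u(s)$, the task at each vertex $u$ becomes: split $\sigma_u$ into countably many nonzero pieces $\sigma_u=\sum_{v\in\dzi{u}}\beta_v$, and then set $|\lambda_v|^2=\beta_v(\rbb_+)>0$ and $\mu_v=\beta_v/\beta_v(\rbb_+)$. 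The weights are then automatically nonzero, and the two moment requirements on the child $\mu_v$ translate into requirements on the piece: $\int s^n\,\D\beta_v<\infty$ and $\int s^{n+1}\,\D\beta_v=\infty$. The aggregate moment conditions hold for free once the pieces reconstitute $\sigma_u$, since $\sum_v|\lambda_v|^2\int s^{n-1}\D\mu_v=\int s^{n-1}\D\sigma_u=\int s^n\D\mu_u<\infty$ and $\sum_v|\lambda_v|^2\int s^{n}\D\mu_v=\int s^{n}\D\sigma_u=\int s^{n+1}\D\mu_u=\infty$. Thus only the per-piece condition carries content.

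The crux is therefore a splitting lemma: every finite Borel measure $\sigma$ on $[1,\infty)$ with $\int s^n\,\D\sigma=\infty$ admits a partition into countably infinitely many nonzero pieces $\sigma=\sum_k\beta_k$ with $\int s^n\,\D\beta_k<\infty$ and $\int s^{n+1}\,\D\beta_k=\infty$ for every $k$. The mechanism is a change of scale: on the range $s\approx R$ one has $s^{n}\Le s^{n+1}/R$, so a chunk of $(n+1)$st-moment mass harvested far out costs only a factor $1/R$ of $n$th-moment mass. Because $\int_{[R,\infty)}s^{n+1}\D\sigma=\infty$ for every $R$ (the complementary part being at most $R^{n+1}\sigma(\rbb_+)$), one may carve $[1,\infty)$ into consecutive slices of unit $(n+1)$st-moment mass at scales $r_m\to\infty$, whose $n$th-moment masses $c_m$ then tend to $0$ while $\sum_m c_m=\int s^n\D\sigma=\infty$. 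Distributing these slices among the pieces by a blockwise round-robin assignment (putting one slice from each far-out block into each group) yields groups that each collect a summable amount of $n$th moment but an infinite amount of $(n+1)$st moment, and together exhaust $\sigma$. Carrying out this simultaneous, diagonal bookkeeping while keeping every piece nonzero is the main technical obstacle.

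Finally, the tree recursion assembles the family. For the extremal tree with root I start at $\koo$ with any probability measure $\mu_{\koo}$ supported in $[1,\infty)$ of the required moment type (for instance the measure with density $(n+1)s^{-(n+2)}$ on $[1,\infty)$ and zero elsewhere, for which $\int s^n\D\mu_{\koo}=n+1$ and $\int s^{n+1}\D\mu_{\koo}=\infty$) and propagate downward: having $\mu_u$ with this invariant, I form $\sigma_u$, apply the splitting lemma along the countably infinite index set $\dzi{u}$, and read off the $\mu_v$ and $|\lambda_v|^2$ as above; each child again satisfies the invariant, so the recursion is self-perpetuating and covers $V$. For the rootless extremal tree there is no vertex to start from, so I also need the dual upward step: given $\mu_w$ satisfying the invariant, I construct a parent measure by letting $\sigma_{\pa{w}}$ have $\beta_w=|\lambda_w|^2\mu_w$ as one designated piece and choosing the remaining pieces (the measures of the siblings of $w$) so that $\mu_{\pa{w}}=s^{-1}\sigma_{\pa{w}}$ is a probability measure with finite $n$th and infinite $(n+1)$st moment; the piece $\beta_w$ alone already forces $\int s^{n+1}\D\mu_{\pa{w}}=\infty$, and the finiteness of $\int s^n\D\mu_{\pa{w}}$ is arranged by taking the other pieces with summable $(n-1)$st moments. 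Iterating this along an ancestral line and filling in each downward subtree hanging off that line by the previous construction exhausts $V$, because any vertex is a descendant of some common ancestor lying on the line. In either case the resulting $\{\mu_u\}$ and $\lambdab$ satisfy \eqref{consist6} with nonzero weights, and the invariant holds everywhere, so Theorem \ref{wsi} and Lemma \ref{ddn} deliver a subnormal $\slam$ with $\slam^n$ densely defined and $\dz{\slam^{n+1}}=\{0\}$, as required.
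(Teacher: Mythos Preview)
Your reduction via Theorem~\ref{wsi} and Lemma~\ref{ddn} to constructing $\{\mu_u\}$ with the consistency relation and the moment invariant $\int s^n\,\D\mu_u<\infty$, $\int s^{n+1}\,\D\mu_u=\infty$ is the paper's strategy as well, and your downward recursion via the splitting lemma is a valid alternative to the paper's more explicit construction in Lemmata~\ref{sq1}--\ref{sq3} (where all vertex measures arise as normalized $s^k$-weighted restrictions of a single discrete measure $\nu$ to a nested family of sets $\varOmega_{\xbf}$). The rooted case is thus handled correctly, by a somewhat different route.

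The rootless case, however, has a genuine gap. You claim that ``the piece $\beta_w$ alone already forces $\int s^{n+1}\,\D\mu_{\pa w}=\infty$'', but since $\D\mu_{\pa w}=s^{-1}\,\D\sigma_{\pa w}$ one has $\int s^{n+1}\,\D\mu_{\pa w}=\int s^{n}\,\D\sigma_{\pa w}$, and the contribution of $\beta_w=|\lambda_w|^2\mu_w$ to this is $|\lambda_w|^2\int s^{n}\,\D\mu_w$, which is \emph{finite} by the invariant on $\mu_w$. So the divergence of the parent's $(n{+}1)$st moment is not inherited from $w$; it must be engineered through the siblings. Writing $a_j=|\lambda_{w_j}|^2\int s^{n-1}\,\D\mu_{w_j}$ and $b_j=|\lambda_{w_j}|^2\int s^{n}\,\D\mu_{w_j}$, you need $\sum_j a_j<\infty$ (your ``summable $(n{-}1)$st moments'') \emph{and} $\sum_j b_j=\infty$; the latter does not follow from anything you wrote, since each $b_j$ is finite. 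The paper's Lemma~\ref{sq4} secures both by placing the $j$th sibling in $\pscr{\vartheta_j}$ with $\sum_j\vartheta_j^{-1}<\infty$ and taking $|\lambda_{w_j}|^2$ proportional to $(\vartheta_j\int s^{n-1}\,\D\mu_{w_j})^{-1}$: then $a_j$ is comparable to $\vartheta_j^{-1}$, while $b_j\Ge\vartheta_j\,|\lambda_{w_j}|^2\int s^{n-1}\,\D\mu_{w_j}$ is bounded below by a positive constant independent of $j$, so $\sum_j b_j=\infty$. Without such a growing-scale mechanism your upward step does not close.
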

   The following simple observation which is related
to Problem \ref{Q} is stated without proof.
   \begin{lem} \label{nowl}
If $A$ is an operator such that $\dz{A^n}=\{0\}$ for
some positive integer $n$, then $A$ is injective.
   \end{lem}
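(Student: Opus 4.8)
The plan is to prove injectivity directly from the definition of the domain of a power, by showing that the kernel of $A$ is forced to lie inside $\dz{A^n}$. Recall that the domain of the $n$th power is defined recursively via $\dz{A^n}=\{f\in\dz{A^{n-1}}\colon A^{n-1}f\in\dz{A}\}$, so that membership $f\in\dz{A^n}$ amounts to requiring that $f,Af,\dots,A^{n-1}f$ all belong to $\dz{A}$. The single fact I would exploit is that, $\dz{A}$ being a linear subspace, the zero vector always lies in $\dz{A}$, and of course $A0=0$.

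Now suppose $f\in\dz{A}$ with $Af=0$. Then $Af=0\in\dz{A}$, so $A^2f=A(Af)=A0=0\in\dz{A}$; iterating this $n-1$ times yields $A^kf=0\in\dz{A}$ for every $k\in\{1,\dots,n-1\}$. Combined with the standing assumption $f\in\dz{A}$, this says precisely that $f$ and all its iterates up to order $n-1$ lie in $\dz{A}$, that is, $f\in\dz{A^n}$. Since by hypothesis $\dz{A^n}=\{0\}$, we conclude $f=0$. Thus $\ker A=\{0\}$ and $A$ is injective.

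I do not expect any genuine obstacle: the argument is elementary and rests entirely on the recursive structure of $\dz{A^n}$ together with the membership $0\in\dz{A}$. The only point demanding a little care is the bookkeeping in the recursion, and the degenerate case $n=1$, where the hypothesis reads $\dz{A}=\{0\}$ and injectivity is immediate because $\ker A\subseteq\dz{A}=\{0\}$.
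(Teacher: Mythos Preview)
Your argument is correct: showing $\ker A\subseteq\dz{A^n}$ via the recursion $A^kf=0\in\dz{A}$ is exactly the natural elementary route, and the edge case $n=1$ is handled. The paper itself states this lemma without proof (calling it a ``simple observation''), so there is no alternative approach to compare against; your write-up supplies precisely the kind of verification the authors evidently deemed routine.
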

By Lemma \ref{nowl}, the operator $\slam$ in Theorem
\ref{main} is automatically injective.
   \begin{rem}
It is worth mentioning that every weighted shift on a
directed tree is closed (cf.\ \cite[Proposition
3.1.2]{j-j-s}). However, it is not true that powers of
weighted shifts on directed trees are closed. In fact,
it may happen that the square of an unbounded
injective unilateral shift $S$ is bounded and
consequently $S^2$ is not closed (see e.g.,\ \cite[p.\
198]{St}). On the other hand, if a subnormal operator
is closed, then all its powers are closed (cf.\
\cite[Proposition 6]{StSz5}; see also
\cite[Proposition 5.3]{St}). In particular, all powers
of the operator $\slam$ in Theorem \ref{main} are
closed.
   \end{rem}
Theorem \ref{main} has a counterpart for composition
operators in $L^2$-spaces. Recall that if
$(X,\ascr,\mu)$ is a $\sigma$-finite measure space and
$\phi\colon X \to X$ is a transformation such that
$\phi^{-1}(\varDelta) \in \ascr$ for every $\varDelta
\in \ascr$, and $\mu(\phi^{-1}(\varDelta)) = 0$ for
every $\varDelta \in \ascr$ such that $\mu(\varDelta)
= 0$, then the operator $C\colon L^2(\mu) \supseteq
\dz{C} \to L^2(\mu)$ given by
   \begin{align*}
\dz{C} = \{g \in L^2(\mu)\colon g \circ \phi \in
L^2(\mu)\} \text{ and } C f = f \circ \phi \text{ for
} f \in \dz{C}
   \end{align*}
is well-defined; we call it a {\em composition}
operator. Composition operators are always closed (see
e.g., \cite[Proposition 3.2]{b-j-j-sC}), but in
general their powers are not (cf.\ \cite[Example
5.4]{b-j-j-sC}). However, if the composition operator
is subnormal, then all its powers are closed.
   \begin{cor} \label{sq5}
For every $n\in \nbb$, there exists a subnormal
composition operator $C$ in an $L^2$-space over
$\sigma$-finite measure space such that $C^n$ is
densely defined and $\dz{C^{n+1}}=\{0\}$.
   \end{cor}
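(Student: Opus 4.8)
The strategy is to transfer Theorem \ref{main} from the weighted-shift setting to the composition-operator setting by exhibiting, for each $n \in \nbb$, a subnormal weighted shift $\slam$ on a directed tree that is unitarily equivalent to a composition operator $C$ in an $L^2$-space over a $\sigma$-finite measure space. Since unitary equivalence preserves subnormality, dense definiteness of powers, and the domain of every power, this will immediately yield a subnormal $C$ with $C^n$ densely defined and $\dz{C^{n+1}} = \{0\}$. The key point is that the extremal directed tree with root (which we may use by the discussion preceding Theorem \ref{main}) is a countably infinite tree whose structure is particularly amenable to such a realization.

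The main work is the construction of the measure space and the transformation $\phi$. First I would take the subnormal weighted shift $\slam$ produced by Theorem \ref{main} on the extremal directed tree $\tcal = (V,E)$; since its weights are nonzero, we can assume $\lambda_v \neq 0$ for all $v \in V^{\circ}$. The plan is to build $X$ as a disjoint union indexed by the vertices, setting $X = \bigsqcup_{v \in V} X_v$ where each $X_v$ is a copy of a point (or a suitably chosen atom), equip $X$ with the $\sigma$-algebra $\ascr = 2^X$ and a purely atomic $\sigma$-finite measure $\mu$ whose atom weights $\mu(X_v)$ encode the weights $\lambdab$ of the shift. The transformation $\phi$ should be defined so that it maps the atom over $v$ to the atom over $\pa{v}$, mirroring the action of $\varLambda_\tcal$, so that composition with $\phi$ reproduces the shift action. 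This is precisely the standard correspondence between weighted shifts on directed trees and composition operators; I would invoke or adapt the construction from \cite{b-j-j-sC} (or the analogous realization used in \cite{b-d-j-s} for the companion result) rather than reprove it.

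The main obstacle will be verifying that the measure-theoretic requirements for a composition operator hold and that the unitary equivalence is exact on the level of domains. Specifically, one must check that $\mu$ is $\sigma$-finite, that $\phi^{-1}(\varDelta) \in \ascr$ for all $\varDelta$, that $\mu \circ \phi^{-1}$ is absolutely continuous with respect to $\mu$, and — most delicately — that the Radon-Nikodym derivative $\frac{\D \mu \circ \phi^{-1}}{\D \mu}$ matches the weight data $\{|\lambda_v|^2\}$ in exactly the way needed for the induced composition operator to be unitarily equivalent to $\slam$. Because $\slam^n$ is densely defined while $\dz{\slam^{n+1}} = \{0\}$, the corresponding integrability and non-integrability conditions (analogous to those in Theorem \ref{wsi}(ii) and Lemma \ref{ddn}(iii)) transfer directly through the unitary, so no new analytic estimates are required; the entire burden is bookkeeping to confirm that the atomic measure space correctly represents the tree.

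Once the unitary equivalence $U\colon \ell^2(V) \to L^2(\mu)$ with $U \slam U^{-1} = C$ is established, the conclusion is immediate: $C$ is subnormal because $\slam$ is, $C^n = U \slam^n U^{-1}$ is densely defined, and $\dz{C^{n+1}} = U(\dz{\slam^{n+1}}) = U(\{0\}) = \{0\}$. The closedness of all powers of $C$ follows from the remark preceding the corollary, since $C$ is a closed subnormal operator.
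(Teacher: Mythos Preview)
Your proposal is correct and follows essentially the same approach as the paper: invoke Theorem \ref{main} to obtain the subnormal weighted shift $\slam$ with nonzero weights, then use the standard realization of such a shift as a composition operator on a discrete $L^2$-space and transfer all properties via unitary equivalence. The paper's proof is the one-line citation ``Apply Theorem \ref{main}, \cite[Theorem 3.2.1]{j-j-s} and \cite[Lemma 4.3.1]{j-j-s2}'', where \cite[Lemma 4.3.1]{j-j-s2} is precisely the weighted-shift-to-composition-operator correspondence you describe; your choice of references (\cite{b-j-j-sC} or \cite{b-d-j-s}) differs but the substance is identical.
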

   \begin{proof}
Apply Theorem \ref{main}, \cite[Theorem 3.2.1]{j-j-s}
and \cite[Lemma 4.3.1]{j-j-s2}.
   \end{proof}
It is worth mentioning that in view of Lemma
\ref{nowl}, the operator $C$ in Corollary \ref{sq5} is
automatically injective. A close inspection of the
proofs of Theorem \ref{main} and \cite[Lemma
4.3.1]{j-j-s2} reveals that the operator $C$ in
Corollary \ref{sq5} can be built on a discrete measure
space. The continuous case can be easily derived from
the discrete one by applying \mbox{\cite[Theorem
2.7]{Jab}}.
   \section{The proof of the main theorem} \label{proofm}
Since the proof of the main theorem is quite long, we
divide it into several lemmas. To begin with, we
recall the following definition: a Borel measure $\mu$
on $\rbb_+$ is said to be {\em discrete} if there
exist a countable subset $\varDelta$ of $\rbb_+$ and a
family $\{\alpha_{t}\}_{t \in \varDelta}$ of positive
real numbers such that $\mu = \sum_{t \in \varDelta}
\alpha_{t} \delta_{t}$. The set $\varDelta$, which is
uniquely determined by $\mu$, is denoted by $\at{\mu}$
(if $\varDelta=\varnothing$, then $\mu=0$).

For the reader's convenience, we include the proof of
the following result which seems to be folklore (the
idea of the proof comes from \cite[Example
1]{b-d-j-s}).
   \begin{lem}
If $m \in \nbb$ and $\varDelta$ is a countable subset
of $\rbb_+$ such that $\sup \varDelta = \infty$, then
there exists a finite discrete Borel measure $\mu$ on
$\rbb_+$ such that $\at{\mu}=\varDelta$,
$\int_0^\infty s^m \D \mu(s) < \infty$ and
$\int_0^\infty s^{m+1} \D \mu(s) = \infty$.
   \end{lem}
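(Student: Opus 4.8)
The plan is to build $\mu$ as a weighted sum of point masses over all of $\varDelta$, engineering a sparse subsequence of $\varDelta$ tending to infinity on which the masses are tuned so that the $(m+1)$st moment diverges while the $m$th moment and the total mass stay finite; the remaining points of $\varDelta$ will carry negligible mass, so as to guarantee $\at{\mu}=\varDelta$ without disturbing these moment conditions.

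First, since $\sup\varDelta=\infty$, the set $\varDelta$ is infinite, and I can extract a strictly increasing sequence $t_1<t_2<\cdots$ of elements of $\varDelta$ with $t_j\to\infty$. Passing to a subsequence, I may arrange that $t_j\Ge 2^j$ for all $j\in\nbb$: at each step one picks an element of $\varDelta$ exceeding both $2^j$ and the previously chosen $t_{j-1}$, which is possible precisely because $\sup\varDelta=\infty$.

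Next, I assign to each $t_j$ the mass $\alpha_{t_j}=t_j^{-(m+1)}$. Then $\sum_j \alpha_{t_j} t_j^{m+1}=\sum_j 1=\infty$, whereas $\sum_j \alpha_{t_j} t_j^{m}=\sum_j t_j^{-1}\Le\sum_j 2^{-j}<\infty$, and likewise $\sum_j \alpha_{t_j}\Le\sum_j 2^{-j}<\infty$. To the remaining points I give harmless mass: enumerating $\varDelta\setminus\{t_j\colon j\in\nbb\}$ as $\{r_i\}_i$, I set $\alpha_{r_i}=2^{-i}(1+r_i^{m})^{-1}>0$, so that $\sum_i\alpha_{r_i}(1+r_i^{m})=\sum_i 2^{-i}<\infty$, whence both $\sum_i\alpha_{r_i}$ and $\sum_i\alpha_{r_i}r_i^m$ are finite.

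Finally I put $\mu=\sum_{t\in\varDelta}\alpha_t\delta_t$. This is a discrete Borel measure, and since every $\alpha_t>0$ while no mass sits outside $\varDelta$, we get $\at{\mu}=\varDelta$. Its total mass $\sum_{t\in\varDelta}\alpha_t$ and its $m$th moment $\int_0^\infty s^m\D\mu(s)=\sum_{t\in\varDelta}\alpha_t t^m$ are both finite, being dominated by the two convergent series above; in particular $\mu$ is finite. On the other hand $\int_0^\infty s^{m+1}\D\mu(s)\Ge\sum_j\alpha_{t_j}t_j^{m+1}=\infty$, since the discarded terms are nonnegative. This yields all the required properties. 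The construction presents no real obstacle; the only point needing care is the geometric growth condition $t_j\Ge 2^j$, which is exactly what forces the $m$th moment $\sum_j t_j^{-1}$ to converge while the $(m+1)$st moment $\sum_j 1$ diverges.
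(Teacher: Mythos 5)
Your proof is correct and follows essentially the same strategy as the paper's: extract a sparse subsequence of $\varDelta$ tending to infinity, place masses on it tuned so that the $m$th moment converges while the $(m+1)$th diverges, and give the remaining points of $\varDelta$ negligible positive mass. The only difference is cosmetic tuning (you take $t_j \Ge 2^j$ with masses $t_j^{-(m+1)}$, whereas the paper takes $t_{j_k}\Ge k$ with masses $k^{-2}t_{j_k}^{-m}$), which changes nothing of substance.
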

   \begin{proof}
By our assumptions $\varDelta$ is countably infinite.
Hence there exists a sequence $\{t_j\}_{j=1}^\infty$
of distinct real numbers such that $\varDelta =
\{t_j\colon j \in \nbb\}$. Since $\sup_{j\in \nbb}t_j
= \infty$, there exists a subsequence
$\{t_{j_k}\}_{k=1}^\infty$ of the sequence
$\{t_j\}_{j=1}^\infty$ such that $t_{j_k} \Ge k$ for
every $k \in \nbb$. Set $\varOmega = \{j_k\colon k \in
\nbb\}$. Clearly, there exists a family
$\{\beta_j\}_{j \in \nbb \setminus \varOmega}$ of
positive real numbers such that
   \begin{align} \label{momenty1}
\sum_{j \in \nbb \setminus \varOmega} \beta_j t_j^m <
\infty.
   \end{align}
Define the family $\{\beta_j\}_{j\in \varOmega}$ of
positive real numbers by
   \begin{align*}
\beta_{j_k} = \frac{1}{k^{2}t_{j_k}^m}, \quad k \in
\nbb.
   \end{align*}
Since $t_{j_k} \Ge k$ for every $k\in \nbb$, we
have
   \begin{align} \label{slon6}
\sum_{j \in \varOmega} \beta_j t_j^m = \sum_{k =
1}^\infty \beta_{j_k} t_{j_k}^m = \sum_{k = 1}^\infty
\frac{1}{k^2} < \infty
   \end{align}
and
   \begin{align}  \label{slon7}
\sum_{j \in \varOmega} \beta_j t_j^{m+1} = \sum_{k =
1}^\infty \beta_{j_k} t_{j_k}^{m+1} = \sum_{k =
1}^\infty \frac{t_{j_k}}{k^2} \Ge \sum_{k = 1}^\infty
\frac{1}{k} = \infty.
   \end{align}
Combining \eqref{momenty1}, \eqref{slon6} and
\eqref{slon7}, we deduce that the measure $\mu :=
\sum_{t \in \varDelta} \alpha_{t} \delta_{t}$ with
$\alpha_{t_j} = \beta_j$ for $j\in \nbb$ meets our
requirements. This completes the proof.
   \end{proof}
   \begin{cor} \label{momenty-c}
If $m \in \nbb$, $\vartheta \in \rbb_+$ and $E$
is a countably infinite subset of $\rbb_+$, then
there exists a finite discrete Borel measure $\mu$
on $\rbb_+$ such that $\at{\mu}$ is a countably
infinite subset of $[\vartheta, \infty)$, $E \cap
\at{\mu} = \varnothing$, $\int_0^\infty s^m \D
\mu(s) < \infty$ and $\int_0^\infty s^{m+1} \D
\mu(s) = \infty$.
   \end{cor}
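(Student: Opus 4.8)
The plan is to reduce Corollary~\ref{momenty-c} to the preceding lemma. That lemma produces, for any countable set $\varDelta \subseteq \rbb_+$ with $\sup \varDelta = \infty$, a finite discrete Borel measure $\mu$ with $\at{\mu} = \varDelta$ and exactly the required moment behaviour (finite $m$th moment, infinite $(m+1)$th moment). Hence it suffices to manufacture a countably infinite set $\varDelta \subseteq [\vartheta, \infty)$ that avoids $E$ and satisfies $\sup \varDelta = \infty$, and then invoke the lemma with this $\varDelta$.

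First I would construct $\varDelta$ by induction. Suppose $t_1, \dots, t_{k-1}$ have already been selected. The interval $[\max\{\vartheta, k\}, \infty)$ is uncountable, whereas $E \cup \{t_1, \dots, t_{k-1}\}$ is countable; therefore their difference is nonempty, and I pick $t_k$ in it. This yields a sequence $\{t_k\}_{k=1}^\infty$ of pairwise distinct points with $t_k \notin E$ and $t_k \Ge \max\{\vartheta, k\} \Ge k$ for every $k \in \nbb$. Set $\varDelta = \{t_k \colon k \in \nbb\}$.

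By construction $\varDelta$ is a countably infinite subset of $[\vartheta, \infty)$ with $\varDelta \cap E = \varnothing$, and $\sup \varDelta = \infty$ since $t_k \Ge k$. Applying the preceding lemma to $\varDelta$ produces a finite discrete Borel measure $\mu$ on $\rbb_+$ such that $\at{\mu} = \varDelta$, $\int_0^\infty s^m \D \mu(s) < \infty$ and $\int_0^\infty s^{m+1} \D \mu(s) = \infty$. Because $\at{\mu} = \varDelta$, all four requirements of the corollary are met, which finishes the argument.

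There is essentially no obstacle here: the whole content is the elementary observation that a countable set $E$ cannot exhaust the uncountable tails $[\max\{\vartheta, k\}, \infty)$, which simultaneously lets the atoms escape to infinity (securing the hypothesis $\sup \varDelta = \infty$ that drives the lemma) and lets them dodge $E$. The only mild care needed is to keep the $t_k$ distinct so that $\varDelta$ is genuinely infinite.
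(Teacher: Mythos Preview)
Your argument is correct and is exactly the intended reduction: the paper states this corollary without proof, leaving implicit the elementary step of choosing a countably infinite $\varDelta \subseteq [\vartheta,\infty)$ disjoint from $E$ with $\sup\varDelta=\infty$ and then invoking the preceding lemma. Your inductive selection of the $t_k$ is a clean way to carry this out.
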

Set $\xsc = \bigcup_{k=0}^\infty \xsc_k$, where
$\xsc_k = \bigsqcup_{j=0}^{k} \nbb^j$ with $\nbb^0 =
\{0\}$.
   \begin{lem} \label{sq1}
If $n\in \nbb$ and $\vartheta\in \rbb_+$, then there
exists a family $\{\nu_{\xbf}\}_{\xbf\in \xsc}$ of
finite discrete Borel measures on $\rbb_+$ such that
   \begin{enumerate}
   \item[(i)] $\{\at{\nu_{\xbf}}\}_{\xbf\in \xsc}$ are
pairwise disjoint countably infinite subsets of
$[\vartheta, \infty)$,
   \item[(ii)]  $\sum_{\xbf \in \nbb^k} \int_0^{\infty}
s^{k+n} \D \nu_{\xbf}(s) \Le 2^{-k}$ for all $k \in
\zbb_+$,
   \item[(iii)] $\int_0^{\infty} s^{k+n+1} \D
\nu_{\xbf}(s) = \infty$ for all $\xbf \in \nbb^k$ and
all $k\in \zbb_+$.
   \end{enumerate}
   \end{lem}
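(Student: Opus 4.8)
The plan is to assemble the family $\{\nu_{\xbf}\}_{\xbf\in\xsc}$ one measure at a time, each coming from Corollary \ref{momenty-c} and then rescaled to fit the summability budget in (ii). The key structural fact is that $\xsc = \bigsqcup_{k=0}^\infty \nbb^k$ is countable, being a countable union of countable sets. For each $k\in\zbb_+$ I would fix an enumeration $\nbb^k = \{\xbf^{k,i}\}_{i\in\nbb}$ (with the convention that the singleton $\nbb^0=\{0\}$ is exhausted by $i=1$); this attaches to every $\xbf\in\xsc$ a layer label $k(\xbf)$ and a position label $i(\xbf)$, and simultaneously provides a single enumeration $\xsc=\{\xbf_1,\xbf_2,\dots\}$ over which the construction recurses.

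First I would build unscaled measures $\mu_1,\mu_2,\dots$ recursively so as to force the disjointness in (i). At step $l$, apply Corollary \ref{momenty-c} with $m = k(\xbf_l)+n$ (an element of $\nbb$ because $n\Ge 1$), the given $\vartheta$, and $E = D \cup \bigcup_{j<l}\at{\mu_j}$, where $D$ is a fixed countably infinite subset of $\rbb_+$ inserted solely to keep $E$ countably infinite at every step, including $l=1$. The corollary then returns a finite discrete measure $\mu_l$ whose atom set is a countably infinite subset of $[\vartheta,\infty)$ disjoint from $E$, hence from all earlier atom sets, and which satisfies $\int_0^\infty s^{k(\xbf_l)+n}\D\mu_l(s)<\infty$ together with $\int_0^\infty s^{k(\xbf_l)+n+1}\D\mu_l(s)=\infty$. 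By induction the sets $\at{\mu_l}$ are pairwise disjoint.

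It remains to satisfy (ii) without spoiling (iii), and this I would achieve by rescaling. For $c>0$ the measure $c\mu_l$ is again finite and discrete with $\at{c\mu_l}=\at{\mu_l}$, its $(k+n)$th moment is multiplied by $c$, and its $(k+n+1)$th moment stays $+\infty$. Because $\mu_l\neq 0$ has a positive atom and $k(\xbf_l)+n\Ge 1$, the number $M_l := \int_0^\infty s^{k(\xbf_l)+n}\D\mu_l(s)$ lies in $(0,\infty)$, so I may put $\nu_{\xbf_l} = c_l\mu_l$ with $0 < c_l \Le 2^{-k(\xbf_l)-i(\xbf_l)}/M_l$. Then each $\nu_{\xbf}$ inherits (i) and (iii) from the corresponding $\mu_l$, while for every $k\in\zbb_+$,
\[
\sum_{\xbf\in\nbb^k}\int_0^\infty s^{k+n}\D\nu_{\xbf}(s) \Le \sum_{i=1}^\infty 2^{-k-i} = 2^{-k},
\]
which is precisely (ii). I expect no genuine obstacle here beyond two pieces of bookkeeping: padding $E$ with $D$ so that Corollary \ref{momenty-c} applies already at the first step, and allocating the geometric budget $2^{-k-i}$ across the countably many indices of each layer so that the layerwise moment sums telescope to $2^{-k}$. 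The latter allocation is the only point where some care is needed, since it must be arranged uniformly in $k$ and $i$ while leaving the atom sets and the divergent $(k+n+1)$th moments untouched.
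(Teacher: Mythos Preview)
Your proof is correct and follows essentially the same approach as the paper's: both construct the measures recursively via Corollary~\ref{momenty-c}, maintaining pairwise disjointness of the atom sets at each step, and then rescale by positive constants to enforce the summability bound in~(ii) without disturbing~(i) or~(iii). The paper organizes the recursion layer-by-layer over the $\xsc_k$'s rather than through a single enumeration of $\xsc$, and is less explicit about the rescaling (it merely says to multiply ``by appropriate positive factors if necessary''), but the substance is identical; your padding set $D$ to keep $E$ countably infinite at the first step is a nice bookkeeping touch that the paper glosses over.
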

   \begin{proof}
We use an induction argument. First, by Corollary
\ref{momenty-c}, there exists a finite discrete Borel
measure $\nu_0$ on $\rbb_+$ such that $\at{\nu_{0}}$
is countably infinite subset of $[\vartheta, \infty)$,
$\int_0^{\infty} s^{n} \D \nu_{0}(s) < \infty$ and
$\int_0^{\infty} s^{n+1} \D \nu_{0}(s) = \infty$. The
induction step is as follows. Fix $k \in \zbb_+$, and
suppose we have constructed a family
$\{\nu_{\xbf}\}_{\xbf \in \xsc_k}$ of finite discrete
Borel measures on $\rbb_+$ such that
$\{\at{\nu_{\xbf}}\}_{\xbf\in \xsc_k}$ are pairwise
disjoint countably infinite subsets of $[\vartheta,
\infty)$,
   \begin{align} \label{ZenWielkiJest}
\int_0^{\infty} s^{j+n} \D \nu_{\xbf}(s) < \infty
\text{ and } \int_0^{\infty} s^{j+n+1} \D
\nu_{\xbf}(s) = \infty
   \end{align}
for all $\xbf \in \nbb^j$ and all $j \in \{0, \ldots,
k\}$. Let $\iota_k\colon \nbb \to \nbb^{k+1}$ be any
bijection. Applying Corollary \ref{momenty-c} to
$E=\bigsqcup_{\xbf \in \xsc_k} \at{\nu_{\xbf}}$, we
find a finite discrete Borel measure
$\nu_{\iota_k(1)}$ on $\rbb_+$ such that
$\at{\nu_{\iota_k(1)}}$ is a countably infinite subset
of $[\vartheta, \infty)$, $\at{\nu_{\iota_k(1)}} \cap
E = \varnothing$, $\int_0^{\infty} s^{n+k+1} \D
\nu_{\iota_k(1)}(s) < \infty$ and $\int_0^{\infty}
s^{n+k+2} \D \nu_{\iota_k(1)}(s) = \infty$. Using
induction on $i$, we obtain a sequence
$\{\nu_{\iota_k(i)}\}_{i=1}^{\infty}$ of finite
discrete Borel measures on $\rbb_+$ such that
$\{\at{\nu_{\xbf}}\}_{\xbf \in \xsc_{k+1}}$ are
pairwise disjoint countably infinite subsets of
$[\vartheta, \infty)$ and \eqref{ZenWielkiJest} holds
for all $\xbf \in \nbb^j$ and all $j \in \{0, \ldots,
k+1\}$. By induction on $k$, we then obtain a family
$\{\nu_{\xbf}\}_{\xbf \in \xsc}$ of finite discrete
Borel measures on $\rbb_+$ such that
$\{\at{\nu_{\xbf}}\}_{\xbf\in \xsc}$ are pairwise
disjoint countably infinite subsets of $[\vartheta,
\infty)$ and \eqref{ZenWielkiJest} holds for all $\xbf
\in \nbb^j$ and all $j\in \zbb_+$. Multiplying the
measures $\nu_{\xbf}$, $\xbf \in \xsc$, by appropriate
positive factors if necessary, we complete the proof.
   \end{proof}
From now on, we write $\zeta_{j_1, \ldots,j_k}$
instead of the more formal expression $\zeta_{(j_1,
\ldots,j_k)}$ whenever $(j_1, \ldots,j_k) \in \nbb^k$
and $k\Ge 2$.
   \begin{lem} \label{sq2}
If $n \in \nbb$ and $\vartheta \in [1,\infty)$, then
there exist a family $\{\varOmega_{\xbf}\}_{\xbf \in
\xsc}$ of countably infinite subsets of
$[\vartheta,\infty)$ and a discrete measure $\nu \in
\pscr{\vartheta}$ such that
   \begin{enumerate}
   \item[(i)] $\at{\nu} = \varOmega_0$,
   \item[(ii)] $\varOmega_0 = \bigsqcup_{j_1=1}^{\infty}
\varOmega_{j_1}$ and $\varOmega_{j_1, \ldots,
j_k} = \bigsqcup_{j_{k+1}=1}^{\infty}
\varOmega_{j_1, \ldots, j_k, j_{k+1}}$ for all
$(j_1, \ldots, j_k) \in \nbb^k$ and $k\in \nbb$,
   \item[(iii)] $\int_{\varOmega_{\xbf}} s^{k+n} \D
\nu(s) < \infty$ and $\int_{\varOmega_{\xbf}}
s^{k+n+1} \D \nu(s) = \infty$ for all $\xbf \in
\nbb^k$ and $k \in \zbb_+$.
   \end{enumerate}
   \end{lem}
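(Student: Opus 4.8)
The plan is to build $\nu$ by superposing the measures $\{\nu_{\xbf}\}_{\xbf\in\xsc}$ supplied by Lemma~\ref{sq1} (applied with the present $n$ and $\vartheta$), and to thread their atoms along infinite paths of the tree $\xsc$, so that the sets $\varOmega_{\xbf}$ emerge as cylinder sets. Since $\vartheta\Ge1$, every atom lies in $[1,\infty)$, so $1\Le s^{m+n}$ there; hence, by Lemma~\ref{sq1}(ii), $\sum_{m=0}^{\infty}\sum_{\xbf\in\nbb^m}\nu_{\xbf}(\rbb_+)\Le\sum_{m=0}^{\infty}2^{-m}<\infty$. I would therefore put $\nu=c\sum_{\xbf\in\xsc}\nu_{\xbf}$ with $c=\big(\sum_{\xbf\in\xsc}\nu_{\xbf}(\rbb_+)\big)^{-1}$, obtaining a discrete probability measure with closed support in $[\vartheta,\infty)$, i.e.\ $\nu\in\pscr{\vartheta}$; as the atom sets $\at{\nu_{\xbf}}$ are pairwise disjoint, $\nu$ coincides with $c\,\nu_{\xbf}$ on $\at{\nu_{\xbf}}$, and every moment condition below is insensitive to the factor $c$.

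Next I would fix for each $\xbf=(j_1,\ldots,j_k)\in\nbb^k$ an enumeration $\at{\nu_{\xbf}}=\{t^{\xbf}_i\}_{i=1}^{\infty}$ and attach to $t^{\xbf}_i$ the infinite path $(j_1,\ldots,j_k,i,1,1,\ldots)$ (so that the atoms of $\nu_0$ receive paths $(i,1,1,\ldots)$). For $\xbf\in\nbb^k$ I then set $\varOmega_{\xbf}=\{t\in\at{\nu}\colon\text{the path of }t\text{ begins with }\xbf\}$, and $\varOmega_0=\at{\nu}$. With this definition (i) is immediate; (ii) holds because each atom carries a complete infinite path, so every cylinder is the disjoint union of its one-step refinements; and each $\varOmega_{\xbf}$ is a countably infinite subset of $[\vartheta,\infty)$ because it contains the countably infinite set $\at{\nu_{\xbf}}$ (all atoms of $\nu_{\xbf}$ begin with $\xbf$).

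It remains to check (iii). Writing $\xbf\preceq\boldsymbol{y}$ when $\boldsymbol{y}$ has $\xbf$ as an initial segment, the construction makes $\varOmega_{\xbf}$ (for $\xbf\in\nbb^k$) the union of all descendant atom sets $\bigsqcup_{m\Ge k}\bigsqcup_{\boldsymbol{y}\in\nbb^m,\,\xbf\preceq\boldsymbol{y}}\at{\nu_{\boldsymbol{y}}}$ together with a remainder consisting of at most $k$ further atoms — one possibly inherited from each proper ancestor of $\xbf$ along its path. Using $s^{k+n}\Le s^{m+n}$ for $s\Ge1$ and $m\Ge k$, and then Lemma~\ref{sq1}(ii), I would bound
\begin{align*}
\int_{\varOmega_{\xbf}}s^{k+n}\D\nu(s)\Le c\sum_{m=k}^{\infty}\sum_{\boldsymbol{y}\in\nbb^m}\int_0^{\infty}s^{m+n}\D\nu_{\boldsymbol{y}}(s)+c\,R_{\xbf}\Le c\sum_{m=k}^{\infty}2^{-m}+c\,R_{\xbf}<\infty,
\end{align*}
where $R_{\xbf}$ denotes the (finite) total $(k+n)$th moment of those finitely many extra atoms. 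For the divergent moment it is enough that $\at{\nu_{\xbf}}\subseteq\varOmega_{\xbf}$, giving $\int_{\varOmega_{\xbf}}s^{k+n+1}\D\nu(s)\Ge c\int_0^{\infty}s^{k+n+1}\D\nu_{\xbf}(s)=\infty$ by Lemma~\ref{sq1}(iii); the case $k=0$ is identical.

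The main obstacle is reconciling the \emph{exact} partition demanded by (ii) with the level-shifted moment gaps in (iii). The naive choice $\varOmega_{\xbf}=\bigsqcup_{\xbf\preceq\boldsymbol{y}}\at{\nu_{\boldsymbol{y}}}$ satisfies (i) and (iii) but violates (ii), since it leaves the block $\at{\nu_{\xbf}}$ sitting at the node $\xbf$ rather than distributed among its children. Threading every atom along a full infinite path turns the $\varOmega_{\xbf}$ into cylinders, for which (ii) is automatic, while confining each ancestor's ``leak'' into a given cylinder to at most one atom, too little to disturb the finite $(k+n)$th moment; the divergent $(k+n+1)$th moment is still delivered intact by the self-block $\at{\nu_{\xbf}}$. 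I expect the only delicate point to be the verification that this overflow is genuinely finite at every node, which the explicit ``tail of ones'' assignment renders transparent.
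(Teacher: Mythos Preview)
Your proof is correct and follows essentially the same strategy as the paper: superpose the measures $\nu_{\xbf}$ from Lemma~\ref{sq1}, then redistribute the atoms of each $\at{\nu_{\xbf}}$ among the children of $\xbf$ so that the resulting sets $\varOmega_{\xbf}$ form a genuine nested partition, while each $\varOmega_{\xbf}$ differs from $\bigsqcup_{\xbf\preceq\boldsymbol{y}}\at{\nu_{\boldsymbol{y}}}$ only by finitely many points. The one noteworthy difference is packaging: the paper builds the $\varOmega_{\xbf}$ by a level-by-level inductive relabelling (carrying exactly one ``extra'' atom $t_{j_1,\ldots,j_l}$ down at each step, cf.\ \eqref{SukHyun3}--\eqref{SukHyun5}), whereas you assign every atom a complete infinite path $(j_1,\ldots,j_k,i,1,1,\ldots)$ up front and take cylinder sets; your version allows up to $k$ ancestor atoms to leak into $\varOmega_{\xbf}$ at level $k$ rather than exactly one, but this is still a finite correction and the moment estimates go through identically.
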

   \begin{proof}
Applying Lemma \ref{sq1}, we get a family
$\{\nu_{\xbf}\}_{\xbf\in \xsc}$ of finite discrete
Borel measures on $\rbb_+$ satisfying the conditions
(i)-(iii) of this lemma. Define the set $\varOmega_0$~
by
   \begin{align} \label{bdw}
\varOmega_0 = \bigsqcup_{\xbf \in \xsc}
\varDelta_{\xbf} = \bigsqcup_{k=0}^\infty \,
\bigsqcup_{\xbf \in \nbb^k} \varDelta_{\xbf} \text{
with $\varDelta_{\xbf} = \at{\nu_{\xbf}}$ for every
$\xbf \in \xsc$}.
   \end{align}
It is plain that $\varOmega_0$ is a countably infinite
subset of $[\vartheta, \infty)$. Set
   \begin{align} \label{wazka}
\nu = \sum_{k=0}^\infty \, \sum_{\xbf \in \nbb^k}
\nu_{\xbf}.
   \end{align}
Clearly, $\nu$ is a discrete Borel measure on $\rbb_+$
satisfying (i). Since $\varOmega_0 \subseteq
[\vartheta, \infty) \subseteq [1,\infty)$, we infer
from Lemma \ref{sq1}(ii) that
   \begin{align} \notag
\nu(\rbb_+) \overset{(\mathrm{i})}\Le \int_0^\infty
s^n \D \nu(s) &\overset{\eqref{wazka}}=
\sum_{k=0}^\infty \, \sum_{\xbf \in \nbb^k}
\int_{\varDelta_{\xbf}} s^{n} \D \nu_{\xbf}(s)
   \\ \label{asa}
&\hspace{1ex}\Le \sum_{k=0}^\infty \, \sum_{\xbf \in
\nbb^k} \int_{\varDelta_{\xbf}} s^{k+n} \D
\nu_{\xbf}(s) \Le 2.
   \end{align}
This means that the measure $\nu$ is finite and
consequently, by (i), the closed support of $\nu$ is
contained in $[\vartheta,\infty)$. It follows from
Lemma \ref{sq1}(iii) that
   \begin{align*}
\int_{\varOmega_0} s^{n+1} \D\nu(s)
\overset{\eqref{wazka}} \Ge \int_{\varDelta_0} s^{n+1}
\D\nu_0(s) = \infty.
   \end{align*}
This and \eqref{asa} show that the inequality and the
equality in (iii) hold for $\xbf \in \nbb^0$ and
$k=0$.

Now we will construct a family
$\{\varOmega_{\xbf}\}_{\xbf \in \xsc \setminus
\nbb^0}$ of countably infinite subsets of
$[\vartheta,\infty)$ and a family $\{t_{\xbf}\}_{\xbf
\in \xsc \setminus \nbb^0} \subseteq
[\vartheta,\infty)$ which satisfy the following
conditions for all $l\in \nbb$ and $(j_1, \ldots, j_l)
\in \nbb^l$,
   \allowdisplaybreaks
   \begin{align}  \label{SukHyun1}
& \varOmega_0 = \bigsqcup_{j_1^\prime=1}^\infty
\varOmega_{j_1^\prime},
   \\  \label{SukHyun2}
&\text{if } l \Ge 2, \text{ then } \varOmega_{j_1,
\ldots, j_{l-1}} = \bigsqcup_{j_{l}^\prime=1}^{\infty}
\varOmega_{j_1, \ldots, j_{l-1}, j_{l}^\prime},
   \\  \label{SukHyun3}
& \text{$\{t_{j_1^\prime}\}_{j_1^\prime = 1}^\infty$
is an injective sequence in $\varDelta_0$ such that
$\varDelta_0 = \{t_{j_1^\prime} \colon j_1^\prime \in
\nbb\}$,}
   \\[1ex] \label{SukHyun4}
&\left. \hspace{-1ex}
   \begin{aligned}
\text{if } l\Ge 2, &\text{ then } \{t_{j_1, \ldots,
j_{l-1}, j_{l}^\prime}\}_{j_{l}^\prime=1}^\infty
\text{ is an injective sequence in $\bigsqcup_{\xbf
\in \xsc_{l-1}} \varDelta_{\xbf}$}
   \\
&\text{ such that }\{t_{j_1,\ldots, j_{l-1}}\} \sqcup
\varDelta_{j_1,\ldots, j_{l-1}} = \{t_{j_1,\ldots,
j_{l-1},j_l^\prime} \colon j_l^\prime \in \nbb\},
   \end{aligned}
   \hspace{1ex}\right\}
   \\[.5ex]   \label{SukHyun5}
& \varOmega_{j_1,\ldots, j_l} = \{t_{j_1,\ldots,
j_l}\} \sqcup \varDelta_{j_1,\ldots, j_l} \sqcup
\bigsqcup_{p=1}^\infty \, \bigsqcup_{(j_{l+1}^\prime,
\ldots, j_{l+p}^\prime) \in \nbb^{p}}^\infty
\varDelta_{j_1,\ldots, j_l, j_{l+1}^\prime, \ldots,
j_{l+p}^\prime}.
   \end{align}
Since $\xsc_k \subsetneq \xsc_{k+1}$ for every $k\in
\nbb$ and $\xsc = \bigcup_{k=1}^{\infty} \xsc_k $, we
can obtain the required families inductively by
constructing ascending sequences of families
$\{\varOmega_{\xbf}\}_{\xbf \in \xsc_k \setminus
\nbb^0}$ and $\{t_{\xbf}\}_{\xbf\in \xsc_k \setminus
\nbb^0}$ satisfying the conditions
\eqref{SukHyun1}-\eqref{SukHyun5} for all $l\in J_k$
and $(j_1, \ldots, j_l) \in \nbb^l$ (clearly, the
conditions \eqref{SukHyun2} and \eqref{SukHyun4} are
void for $l=1$).

For the base step ($k=1$), note that since
$\varDelta_0$ is a countably infinite subset of
$[\vartheta, \infty)$, there exists a sequence
$\{t_{j_1^\prime}\}_{j_1^\prime = 1}^\infty \subseteq
[\vartheta, \infty)$ which satisfies \eqref{SukHyun3}.
For $j_1 \in \nbb$, we define the set
$\varOmega_{j_1}$ by \eqref{SukHyun5} with $l=1$. It
follows from \eqref{bdw} and \eqref{SukHyun3} that
$\varOmega_{j_1}$, $j_1\in \nbb$, are well-defined
countably infinite subsets of $[\vartheta, \infty)$
that satisfy \eqref{SukHyun1}.

For the induction step, let $k$ be some unspecified
positive integer. Suppose we have constructed a family
$\{\varOmega_{\xbf}\}_{\xbf \in \xsc_k \setminus
\nbb^0}$ of countably infinite subsets of
$[\vartheta,\infty)$ and a family
$\{t_{\xbf}\}_{\xbf\in \xsc_k \setminus
\nbb^0}\subseteq [\vartheta, \infty)$ such that
\eqref{SukHyun1}-\eqref{SukHyun5} hold for all $l\in
J_k$ and $(j_1, \ldots, j_l) \in \nbb^l$. Let
$(j_1,\ldots, j_k) \in \nbb^{k}$. Since
$\varDelta_{j_1,\ldots, j_k}$ is a countably infinite
subset of $[\vartheta, \infty)$, we infer from
\eqref{SukHyun3} if $k=1$, or \eqref{SukHyun4} with
$l=k$ if $k \Ge 2$, that there exists a sequence
$\{t_{j_1, \ldots, j_k,
j_{k+1}^\prime}\}_{j_{k+1}^\prime = 1}^\infty
\subseteq [\vartheta, \infty)$ which satisfies
\eqref{SukHyun4} with $l=k+1$. For $j_{k+1} \in \nbb$,
we define the set $\varOmega_{j_1,\ldots, j_{k+1}}$ by
\eqref{SukHyun5} with $l=k+1$. It follows from
\eqref{SukHyun4} with $l=k+1$ that
$\varOmega_{j_1,\ldots, j_{k+1}}$, $j_{k+1} \in \nbb$,
are well-defined countably infinite subsets of
$[\vartheta,\infty)$ which satisfy \eqref{SukHyun2}
for $l=k+1$. This completes the induction step. Using
induction, we obtain the required systems
$\{t_{\xbf}\}_{\xbf \in \xsc \setminus \nbb^0}$ and
$\{\varOmega_{\xbf}\}_{\xbf \in \xsc \setminus
\nbb^0}$ satisfying \eqref{SukHyun1}-\eqref{SukHyun5}
for all $l\in \nbb$ and $(j_1, \ldots, j_l) \in
\nbb^l$.

Clearly, the so constructed family
$\{\varOmega_{\xbf}\}_{\xbf \in \xsc}$ satisfies (i)
and (ii). It remains to show that the inequality and
the equality in (iii) hold for all $\xbf \in \nbb^k$
and $k \in \nbb$. Using \eqref{SukHyun5} with $l=k$,
the conditions \eqref{bdw} and \eqref{wazka}, the fact
that $\varDelta_{\xbf} \subseteq [1, \infty)$ for
every $\xbf \in \xsc$ and Lemma \ref{sq1}(ii), we see
that for all $k\in \nbb$ and $(j_1, \ldots, j_k) \in
\nbb^k$,
   \allowdisplaybreaks
   \begin{align*}
\int_{\varOmega_{j_1, \ldots, j_k}} s^{k+n} \D \nu(s)
&\overset{\eqref{SukHyun5}}= \delta +
\sum_{p=1}^\infty \, \sum_{(j_{k+1}, \ldots, j_{k+p})
\in \nbb^{p}} \int_{\varDelta_{j_1, \ldots, j_{k+p}}}
s^{k+n} \D \nu_{j_1, \ldots, j_{k+p}}(s)
   \\
   & \hspace{1.4ex}\Le \delta + \sum_{p=1}^\infty \,
\sum_{(j_{k+1}, \ldots, j_{k+p}) \in \nbb^{p}}
\int_{\varDelta_{j_1, \ldots, j_{k+p}}} s^{k+p+n} \D
\nu_{j_1, \ldots, j_{k+p}}(s)
   \\
& \hspace{1.4ex} \Le \delta + \frac 12 < \infty,
   \end{align*}
where
   \begin{align*}
\delta=t_{j_1,\ldots,j_k}^{k+n}
\nu(\{t_{j_1,\ldots,j_k}\}) + \int_{\varDelta_{j_1,
\ldots, j_k}} s^{k+n} \D \nu_{j_1, \ldots, j_k}(s).
   \end{align*}
Arguing as above and using Lemma \ref{sq1}(iii), we
deduce that for all $k\in \nbb$ and $(j_1, \ldots,
j_k) \in \nbb^k$,
   \begin{align*}
\int_{\varOmega_{j_1,\ldots,j_k}} s^{k+n+1} \D \nu(s)
\Ge \int_{\varDelta_{j_1,\ldots,j_k}} s^{k+n+1} \D
\nu_{j_1,\ldots,j_k}(s) = \infty,
   \end{align*}
which yields (iii). Hence $\nu$ is a finite nonzero
discrete Borel measure on $\rbb_+$ satisfying (i) and
(iii). Replacing $\nu$ by $\nu(\rbb_+)^{-1}\nu$ if
necessary, we complete the proof.
   \end{proof}
   \begin{lem} \label{sq3}
Let $\tcal=(V,E)$ be an extremal directed tree.
Suppose $n\in \nbb$, $\vartheta \in [1,\infty)$ and
$w\in V$. Then there exist systems $\{\lambda_v\}_{v
\in \deso{w}}\subseteq (0,\infty)$ and $\{\mu_v\}_{v
\in \des{w}} \subseteq \pscr{\vartheta}$ such that for
every $u \in \des{w}$,
   \begin{align}  \label{consist7}
& \text{$\mu_u(\varDelta) = \sum_{v \in \dzi{u}}
\lambda_v^2 \int_\varDelta \frac{1}{s}\D \mu_v(s)$ for
every $\varDelta \in \borel{\rbb_+}$,}
   \\       \label{prop2}
&\int_0^\infty s^n \D \mu_u(s) < \infty \text{ and }
\int_0^\infty s^{n+1} \D \mu_u(s) = \infty.
   \end{align}
   \end{lem}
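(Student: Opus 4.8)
The plan is to realize the rooted subtree $\des{w}$ as a copy of the abstract index tree $\xsc$ and to transport the data produced by Lemma~\ref{sq2} along this identification. Since $\tcal$ is extremal, every vertex of $\des{w}$ has a countably infinite set of children, so one builds recursively a bijection $\Phi\colon\xsc\to\des{w}$ with $\Phi(0)=w$ and $\dzi{\Phi(\xbf)}=\{\Phi(\xbf,j)\colon j\in\nbb\}$ for every $\xbf$, where at each vertex one simply fixes a bijection of $\nbb$ onto its children and $(\xbf,j)$ denotes the one-coordinate extension of $\xbf$ (for $\xbf=0$ this is the length-one index $j$, matching Lemma~\ref{sq2}(ii)). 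Writing $v_{\xbf}=\Phi(\xbf)$, the parent of $v_{(\xbf,j)}$ is $v_{\xbf}$, and the families to be constructed become $\{\mu_{v_{\xbf}}\}_{\xbf\in\xsc}$ and $\{\lambda_{v_{(\xbf,j)}}\}_{(\xbf,j)\in\xsc\setminus\nbb^0}$. I then apply Lemma~\ref{sq2} with the given $n$ and $\vartheta$ to obtain a family $\{\varOmega_{\xbf}\}_{\xbf\in\xsc}$ and a measure $\nu\in\pscr{\vartheta}$ with $\at{\nu}=\varOmega_0$.

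\emph{The heart of the construction} is to place, at level $k$, the measure obtained by weighting $\nu$ by $s^{k}$ and restricting to $\varOmega_{\xbf}$. Concretely, for $\xbf\in\nbb^k$ I set $Z_{\xbf}=\int_{\varOmega_{\xbf}}s^{k}\D\nu(s)$ and define
\[
\mu_{v_{\xbf}}(\varDelta)=\frac{1}{Z_{\xbf}}\int_{\varDelta\cap\varOmega_{\xbf}}s^{k}\D\nu(s),\qquad \varDelta\in\borel{\rbb_+}.
\]
To see this makes sense, note $\varOmega_{\xbf}\subseteq\varOmega_0=\at{\nu}\subseteq[\vartheta,\infty)\subseteq[1,\infty)$, so every point of the countably infinite set $\varOmega_{\xbf}$ is an atom of $\nu$; hence $\nu(\varOmega_{\xbf})>0$, and since $1\Le s^{k}\Le s^{k+n}$ on $\varOmega_{\xbf}$ one gets $0<Z_{\xbf}\Le\int_{\varOmega_{\xbf}}s^{k+n}\D\nu(s)<\infty$ by Lemma~\ref{sq2}(iii). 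Thus each $\mu_{v_{\xbf}}$ is a probability measure supported in $[\vartheta,\infty)$, that is $\mu_{v_{\xbf}}\in\pscr{\vartheta}$ (for $k=0$ this is just $\mu_w=\nu$).

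The weights are forced by the consistency requirement: I put $\lambda_{v_{(\xbf,j)}}=(Z_{(\xbf,j)}/Z_{\xbf})^{1/2}\in(0,\infty)$. To verify \eqref{consist7} at $u=v_{\xbf}$, I compute each summand over the child $v_{(\xbf,j)}$, whose measure is supported on $\varOmega_{(\xbf,j)}$ and equals $Z_{(\xbf,j)}^{-1}s^{k+1}\D\nu$ there; the factor $\tfrac1s$ in \eqref{consist7} lowers the power $s^{k+1}$ to $s^{k}$, and multiplying by $\lambda_{v_{(\xbf,j)}}^{2}=Z_{(\xbf,j)}/Z_{\xbf}$ turns the summand into $Z_{\xbf}^{-1}\int_{\varDelta\cap\varOmega_{(\xbf,j)}}s^{k}\D\nu(s)$. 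Summing over $j$ and using $\bigsqcup_{j}\varOmega_{(\xbf,j)}=\varOmega_{\xbf}$ from Lemma~\ref{sq2}(ii) gives exactly $\mu_{v_{\xbf}}(\varDelta)$, so \eqref{consist7} holds. Finally \eqref{prop2} is immediate: $\int_0^{\infty}s^{n}\D\mu_{v_{\xbf}}(s)=Z_{\xbf}^{-1}\int_{\varOmega_{\xbf}}s^{k+n}\D\nu(s)<\infty$ and $\int_0^{\infty}s^{n+1}\D\mu_{v_{\xbf}}(s)=Z_{\xbf}^{-1}\int_{\varOmega_{\xbf}}s^{k+n+1}\D\nu(s)=\infty$, both by Lemma~\ref{sq2}(iii).

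The only genuinely delicate point is the choice of exponent: the $\tfrac1s$ appearing in \eqref{consist7} means that a level-$(k+1)$ measure proportional to $s^{k+1}\D\nu$ is pushed down to one proportional to $s^{k}\D\nu$, which is precisely the level-$k$ measure, so the exponent must grow by exactly one per level. The whole scheme then hinges on the fact that this same linearly growing weight keeps every normalization $Z_{\xbf}$ simultaneously finite and strictly positive, which is guaranteed by the shifted finiteness and divergence of moments in Lemma~\ref{sq2}(iii) together with $\varOmega_{\xbf}\subseteq[1,\infty)$. Everything else is the routine bookkeeping of the tree identification and of interchanging sum and integral for the nonnegative integrand.
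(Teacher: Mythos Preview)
Your proof is correct and follows essentially the same approach as the paper's: identify $\des{w}$ with the index tree $\xsc$, invoke Lemma~\ref{sq2} for $\nu$ and $\{\varOmega_{\xbf}\}$, and at level $k$ define $\mu_{v_{\xbf}}$ as the normalized $s^{k}\D\nu$ restricted to $\varOmega_{\xbf}$, with the weights $\lambda_{v_{(\xbf,j)}}=(Z_{(\xbf,j)}/Z_{\xbf})^{1/2}$ forced by the consistency relation. The only differences from the paper are cosmetic---you package the normalizing integrals as $Z_{\xbf}$ and justify $0<Z_{\xbf}<\infty$ directly from $\varOmega_{\xbf}\subseteq\at{\nu}\cap[1,\infty)$ rather than via Lemma~\ref{zespol}, but the construction and verification are identical.
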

   \begin{proof}
Set
   \begin{align*}
E_{\xsc} = \big\{&(0,j_1)\colon j_1 \in \nbb \big\}
   \\
& \sqcup \bigsqcup_{k=1}^{\infty} \big\{\big((j_1,
\ldots, j_k), (j_1, \ldots, j_k, j_{k+1})\big)\colon
j_1, \ldots, j_k, j_{k+1} \in \nbb\big\}.
   \end{align*}
Note that $(\xsc,E_{\xsc})$ is a directed tree with
root $0$ (see Figure $1$).
   \begin{center}
   \includegraphics[width=8cm]
   {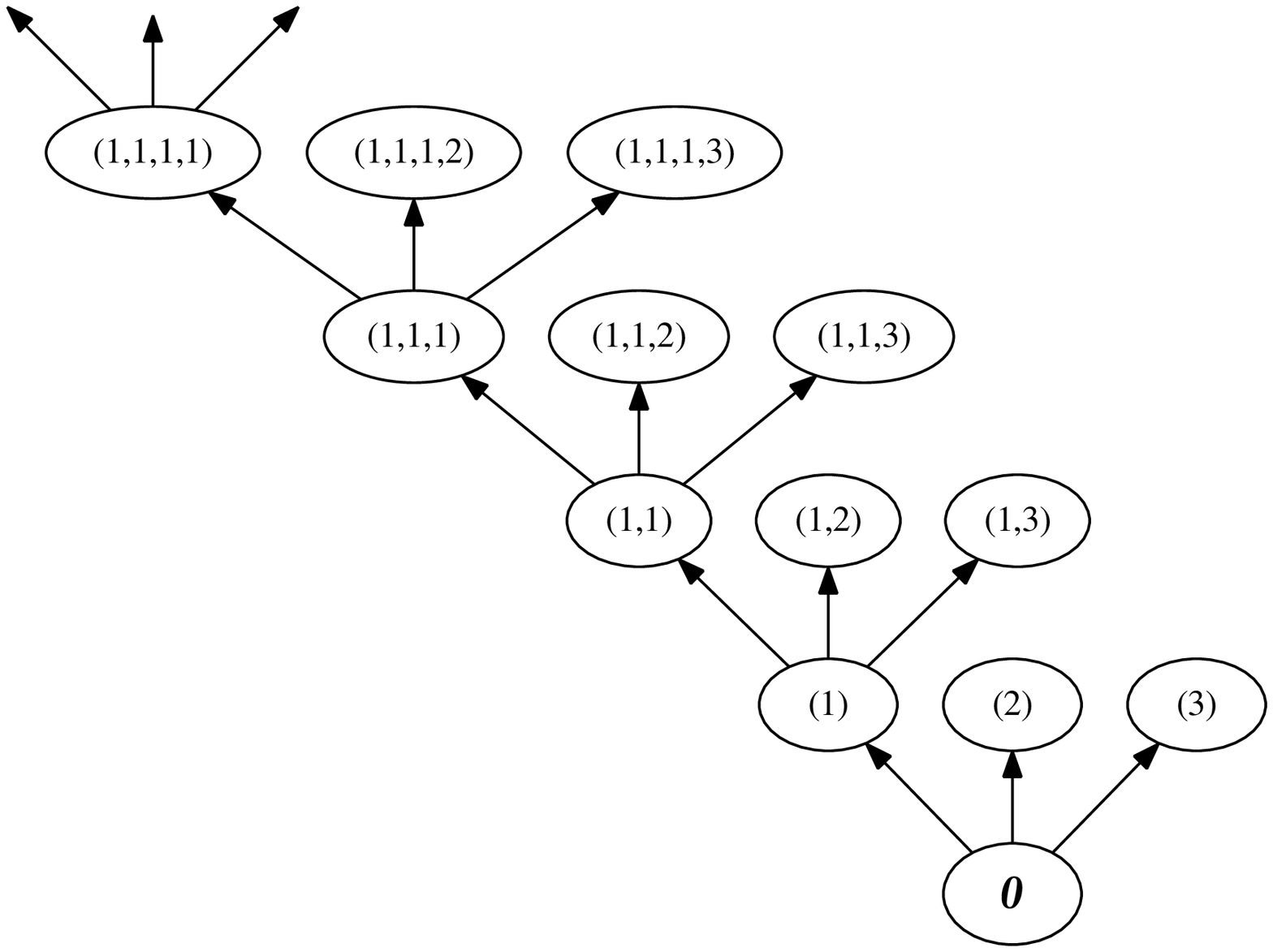}
   \\
   {\small {\sf Figure 1}}
   \end{center}
Using induction and the fact that $(\des{w}, E \cap
(\des{w}\times \des{w}))$ is an extremal directed tree
(because so is $\tcal$), we deduce that there exists a
family of distinct vertices $\{\xi_{\xbf}\}_{\xbf\in
\xsc}$ such that $\des{w}=\{\xi_{\xbf}\colon \xbf \in
\xsc\}$ and
   \begin{gather*}
\xi_{0} = w, \quad \dzi{\xi_{0}} =
\Big\{\xi_{j_1}\colon j_{1} \in \nbb\Big\},
   \\
\dzi{\xi_{j_1,\ldots,j_k}} =
\Big\{\xi_{j_1,\ldots,j_k,j_{k+1}}\colon j_{k+1} \in
\nbb\Big\}, \quad k\in \nbb, \, (j_1,\ldots,j_k) \in
\nbb^k.
   \end{gather*}
Then the mapping $\varPhi\colon \des{w} \to \xsc$
defined by
   \begin{align*}
\varPhi(\xi_{\xbf}) = \xbf, \quad \xbf \in \xsc,
   \end{align*}
is a graph isomorphism. In the rest of the proof we
will use this identification.

Let $\nu$ and $\{\varOmega_{\xbf}\}_{\xbf \in \xsc}$
be as in Lemma \ref{sq2} (with the same $n$ and
$\vartheta$). In view of Lemmata \ref{zespol} and
\ref{sq2}(iii), we have
   \begin{align} \label{poprdef}
0 < \int_{\varOmega_{\xbf}} s^k \D \nu(s) < \infty,
\quad \xbf \in \nbb^k, \, k\in \nbb.
   \end{align}
Set $\mu_{0}=\nu$. Then $\mu_{0} \in
\pscr{\vartheta}$. For a given $k \in \nbb$ and $(j_1,
\ldots, j_k) \in \nbb^k$, we define the Borel measure
$\mu_{j_1, \ldots, j_k}$ on $\rbb_+$ and
$\lambda_{j_1, \ldots, j_k} \in (0,\infty)$ by
   \allowdisplaybreaks
   \begin{align*}
\mu_{j_1, \ldots,j_k}(\varDelta) & =
\frac{\int_{\varDelta \cap \varOmega_{j_1,
\ldots,j_k}} s^k \D \nu(s)}{\int_{\varOmega_{j_1,
\ldots,j_k}} s^k \D \nu(s)}, \quad \varDelta \in
\borel{\rbb_+},
   \\[2ex]
\lambda_{j_1, \ldots, j_k} & =
   \begin{cases}
   \sqrt{\int_{\varOmega_{j_1, \ldots, j_k}} s^k
\D\nu(s)} & \text{ if } k=1,
   \\[2.5ex]
\sqrt{\frac{\int_{\varOmega_{j_1, \ldots, j_k}} s^k
\D\nu(s)}{\int_{\varOmega_{j_1, \ldots, j_{k-1}}}
s^{k-1} \D\nu(s)}} & \text{ if } k \Ge 2.
   \end{cases}
   \end{align*}
According to \eqref{poprdef}, $\mu_{j_1, \ldots, j_k}$
and $\lambda_{j_1, \ldots, j_k}$ are well-defined.
Since $\varOmega_{j_1, \ldots, j_k} \subseteq
[\vartheta,\infty)$, we see that $\mu_{j_1, \ldots,
j_k} \in \pscr{\vartheta}$.

Now, we verify that the conditions \eqref{consist7}
and \eqref{prop2} hold. Fix $k\in \nbb$ and $u=(j_1,
\ldots, j_k) \in \nbb^k$. Using \cite[Theorem
1.29]{Rud}, we infer from Lemma \ref{sq2}(ii) that
   \allowdisplaybreaks
   \begin{multline*}
\sum_{j_{k+1}=1}^\infty
\lambda_{j_1,\ldots,j_k,j_{k+1}}^2 \int_{\varDelta}
\frac{1}{s} \D \mu_{j_1,\ldots,j_k,j_{k+1}}(s) =
\sum_{j_{k+1}=1}^\infty \frac{\int_{\varDelta \cap
\varOmega_{j_1,\ldots,j_k,j_{k+1}}} s^k \D
\nu(s)}{\int_{\varOmega_{j_1, \ldots, j_{k}}} s^k
\D\nu(s)}
    \\
= \frac{\int_{\varDelta \cap \varOmega_{j_1, \ldots,
j_k}} s^k \D\nu(s)}{\int_{\varOmega_{j_1, \ldots,
j_{k}}} s^k \D\nu(s)} = \mu_{j_1, \ldots,
j_k}(\varDelta), \quad \varDelta \in \borel{\rbb_+},
   \end{multline*}
which gives \eqref{consist7}. By \cite[Theorem
1.29]{Rud} again and Lemma \ref{sq2}(iii), we have
   \begin{align*}
\int_0^\infty s^n \D \mu_{j_1,\ldots,j_k}(s) & =
\frac{\int_{\varOmega_{j_1,\ldots,j_k}} s^{k+n} \D
\nu(s)}{\int_{\varOmega_{j_1,\ldots,j_k}} s^k \D
\nu(s)} < \infty,
   \\
\int_0^\infty s^{n+1} \D \mu_{j_1,\ldots,j_k}(s) & =
\frac{\int_{\varOmega_{j_1,\ldots,j_k}} s^{k+n+1} \D
\nu(s)}{\int_{\varOmega_{j_1,\ldots,j_k}} s^k \D
\nu(s)} = \infty,
   \end{align*}
which means that \eqref{prop2} holds. Finally, arguing
as above and using Lemma \ref{sq2}(i), we can verify
that if $u=0$, then \eqref{consist7} and \eqref{prop2}
hold as well. This completes the proof.
   \end{proof}
   \begin{lem} \label{sq4}
Let $\tcal=(V,E)$ be an extremal directed tree, $w \in
V^{\circ}$, $x=\pa{w}$ and $n \in \nbb$. Suppose that
$\{\lambda_v\}_{v\in \deso{w}} \subseteq (0,\infty)$
and $\{\mu_v\}_{v \in \des{w}} \subseteq \pscr{1}$
satisfy \eqref{consist7} and \eqref{prop2} for every
$u \in \des{w}$. Then there exist $\{\lambda_v\}_{v\in
\deso{x} \setminus \deso{w}} \subseteq (0,\infty)$ and
$\{\mu_v\}_{v \in \des{x} \setminus \des{w}} \subseteq
\pscr{1}$ such that $\{\lambda_v\}_{v\in \deso{x}}$
and $\{\mu_v\}_{v \in \des{x}}$ satisfy
\eqref{consist7} and \eqref{prop2} for all $u \in
\des{x}$.
   \end{lem}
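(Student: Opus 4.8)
The plan is to grow the new branches that hang off $x$ by repeated use of Lemma~\ref{sq3}, to choose the edge weights leading into them so as to force the $(n+1)$th moment at $x$ to diverge while keeping its $n$th moment finite, and finally to rescale so that $\mu_x$ becomes a probability measure. First I would enumerate the children of $x$: since $\tcal$ is extremal, $\dzi{x}$ is countably infinite and contains $w$, so I may write $\dzi{x} = \{w\} \sqcup \{u_j\colon j\in\nbb\}$. For each $j$ I apply Lemma~\ref{sq3} to the subtree rooted at $u_j$, with the given $n$ and with $\vartheta=4^j$, obtaining weights $\{\lambda_v\}_{v\in\deso{u_j}}\subseteq(0,\infty)$ and probability measures $\{\mu_v\}_{v\in\des{u_j}}\subseteq\pscr{4^j}\subseteq\pscr{1}$ satisfying \eqref{consist7} and \eqref{prop2} for every $u\in\des{u_j}$. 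This fixes all weights and measures interior to the new branches. Crucially, the edge weight $\lambda_{u_j}$ leading into $u_j$ (and likewise $\lambda_w$) never enters a consistency identity interior to $\des{u_j}$ or to $\des{w}$, so I remain free to choose $\lambda_w$ and the $\lambda_{u_j}$.

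The heart of the matter is the moment condition at $x$. Writing $a_j=\int_0^\infty s^{n-1}\D\mu_{u_j}(s)$ and $b_j=\int_0^\infty s^n\D\mu_{u_j}(s)$, Lemma~\ref{zespol} together with \eqref{prop2} yields $0<a_j\Le b_j<\infty$, and since $\mu_{u_j}$ is supported in $[4^j,\infty)$ we get $b_j\Ge 4^j a_j$. I then set $\lambda_w=1$ and $\lambda_{u_j}^2=(2^j a_j)^{-1}$, and I define a measure $\sigma$ on $\rbb_+$ by the would-be consistency identity at $x$,
\begin{align*}
\sigma(\varDelta) = \lambda_w^2 \int_\varDelta \frac{1}{s}\D\mu_w(s) + \sum_{j=1}^\infty \lambda_{u_j}^2 \int_\varDelta \frac{1}{s}\D\mu_{u_j}(s), \quad \varDelta\in\borel{\rbb_+}.
\end{align*}
Because every $\mu_v$ with $v\in\dzi{x}$ is supported in $[1,\infty)$, so is $\sigma$; and since $a_j\Ge 1$ its total mass is at most $\lambda_w^2+\sum_j 2^{-j}<\infty$.

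Computing moments, $\int_0^\infty s^n\D\sigma(s)=\lambda_w^2\int_0^\infty s^{n-1}\D\mu_w(s)+\sum_j\lambda_{u_j}^2 a_j=\lambda_w^2\int_0^\infty s^{n-1}\D\mu_w(s)+\sum_j 2^{-j}<\infty$ (the $\mu_w$-term is finite by \eqref{prop2} and Lemma~\ref{zespol}), whereas $\int_0^\infty s^{n+1}\D\sigma(s)=\lambda_w^2\int_0^\infty s^n\D\mu_w(s)+\sum_j\lambda_{u_j}^2 b_j\Ge\sum_j 2^{-j}4^j=\infty$. The point I want to flag is \emph{why} the divergence must be engineered in this way: pushing $\mu_w$ down through $\tfrac1s$ lowers the order of moments, so the $w$-term contributes only $\int_0^\infty s^n\D\mu_w(s)<\infty$ to the $(n+1)$th moment at $x$, and the required blow-up therefore cannot originate from $w$. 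It must instead be produced by the infinite family of new children, which is exactly what driving $\vartheta_j\to\infty$ (hence $b_j/a_j\to\infty$) while keeping $\sum_j\lambda_{u_j}^2 a_j$ summable accomplishes; this balancing is the one genuinely nonroutine step.

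Finally I put $M=\sigma(\rbb_+)\in(0,\infty)$, replace $\lambda_w$ and every $\lambda_{u_j}$ by $\lambda_w/\sqrt{M}$ and $\lambda_{u_j}/\sqrt{M}$, and set $\mu_x=M^{-1}\sigma\in\pscr{1}$. This rescaling leaves all interior identities untouched, turns $\sigma$ into a probability measure $\mu_x$ for which \eqref{consist7} holds at $x$ by construction, and preserves \eqref{prop2} at $x$ since finiteness and infiniteness of the two moments are unaffected by the positive factor $M^{-1}$. Collecting the data given on $\des{w}$, the data produced by Lemma~\ref{sq3} on each $\des{u_j}$, and the newly defined $\lambda_w$, $\lambda_{u_j}$ and $\mu_x$, one verifies \eqref{consist7} and \eqref{prop2} separately at $x$, throughout $\des{w}$, and throughout each $\des{u_j}$; together these cover all $u\in\des{x}$, yielding the desired systems $\{\lambda_v\}_{v\in\deso{x}\setminus\deso{w}}$ and $\{\mu_v\}_{v\in\des{x}\setminus\des{w}}$.
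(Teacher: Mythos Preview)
Your proof is correct and follows essentially the same approach as the paper's: enumerate the children of $x$, invoke Lemma~\ref{sq3} on each new child with thresholds $\vartheta_j\to\infty$, choose the incoming edge weights so that $\sum_j\lambda_{u_j}^2\int s^{n-1}\D\mu_{u_j}$ converges while $\sum_j\lambda_{u_j}^2\int s^{n}\D\mu_{u_j}$ diverges, and rescale to obtain a probability measure. The only differences are cosmetic---the paper treats $w$ uniformly as $w_0$ with $\vartheta_0=1$ and works with abstract sequences $\{\vartheta_j\}$ satisfying $\sum\vartheta_j^{-1}<\infty$, whereas you keep $w$ separate and make the concrete choices $\vartheta_j=4^j$, $\lambda_{u_j}^2=(2^ja_j)^{-1}$; your choice is precisely an instance of the generalization recorded in the remark following the paper's proof (with $\delta_j=2^j$).
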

   \begin{proof}
By our assumption, there exists a sequence
$\{w_j\}_{j=0}^\infty$ of distinct vertices such that
$\dzi{x} = \{w_j\colon j\in \zbb_+\}$ and $w_0 = w$.
Note that
   \allowdisplaybreaks
   \begin{align} \label{des}
   \begin{aligned}
\deso{x}\setminus \deso{w} & = \{w\} \sqcup
\bigsqcup_{j=1}^\infty \des{w_j},
   \\
\des{x}\setminus \des{w} & = \{x\} \sqcup
\bigsqcup_{j=1}^\infty \des{w_j}.
   \end{aligned}
   \end{align}
Set $\vartheta_0=1$ and take a sequence
$\{\vartheta_j\}_{j=1}^\infty \subseteq [1,\infty)$
such that
   \begin{align} \label{dobor}
\sum_{j=1}^{\infty} \frac{1}{\vartheta_j} < \infty.
   \end{align}
Applying Lemma \ref{sq3}, we see that for each $j \in
\nbb$ there exist $\{\lambda_v\}_{v \in \deso{w_j}}
\subseteq (0,\infty)$ and $\{\mu_v\}_{v\in
\des{w_j}}\subseteq \pscr{\vartheta_j}$ which satisfy
\eqref{consist7} and \eqref{prop2} for all $u\in
\des{w_j}$. Define the sequence
$\{\tilde\lambda_{w_j}\}_{j=0}^\infty \subseteq
(0,\infty)$ by
   \begin{align*}
\tilde\lambda_{w_j} = \frac{1}{\sqrt{\vartheta_j
\int_0^\infty s^{n-1} \D\mu_{w_j}(s)}}, \quad j\in
\zbb_+.
   \end{align*}
By \eqref{prop2} and Lemma \ref{zespol}, the
quantities $\tilde\lambda_{w_j}$, $j \in \zbb_+$, are
well-defined. Noting that $\{\mu_{w_j}\}_{j=0}^\infty
\subseteq \pscr{1}$, we get
   \begin{align*}
\zeta:= \sum_{j=0}^\infty \tilde\lambda_{w_j}^2
\int_0^\infty \frac{1}{s} \D \mu_{w_j}(s) \Le
\sum_{j=0}^\infty \frac{1}{\vartheta_j \int_0^\infty
s^{n-1} \D\mu_{w_j}(s)} \Le \sum_{j=0}^\infty
\frac{1}{\vartheta_j} \overset{\eqref{dobor}} <
\infty,
   \end{align*}
and $\zeta > 0$. Set $\lambda_{w_j} = \tilde
\lambda_{w_j}/\sqrt{\zeta}$ for $j \in \zbb_+$ and
define the measure $\mu_x \in \pscr{1}$ by
   \begin{align*}
\mu_x(\varDelta) = \sum_{j=0}^\infty \lambda_{w_j}^2
\int_{\varDelta} \frac{1}{s} \D\mu_{w_j}(s), \quad
\varDelta \in \borel{\rbb_+}.
   \end{align*}
Clearly, with such $\{\lambda_v\}_{v\in \deso{x}
\setminus \deso{w}} \subseteq (0,\infty)$ and
$\{\mu_v\}_{v \in \des{x} \setminus \des{w}} \subseteq
\pscr{1}$ (cf.\ \eqref{des}), the systems
$\{\lambda_v\}_{v\in \deso{x}}$ and $\{\mu_v\}_{v \in
\des{x}}$ satisfy \eqref{consist7} for all $u \in
\des{x}$. It remains to prove that \eqref{prop2} holds
for $u=x$. For this, note that by \cite[Theorem
1.29]{Rud}, we have
   \begin{align*}
\int_0^\infty s^n \D\mu_x(s) = \frac{1}{\zeta}
\sum_{j=0}^\infty \tilde \lambda_{w_j}^2 \int_0^\infty
s^{n-1} \D\mu_{w_j}(s) = \frac{1}{\zeta}
\sum_{j=0}^\infty \frac{1}{\vartheta_j}
\overset{\eqref{dobor}} < \infty,
   \end{align*}
and
\allowdisplaybreaks
   \begin{align*}
\int_0^\infty s^{n+1} \D\mu_x(s) & = \frac{1}{\zeta}
\sum_{j=0}^\infty \tilde \lambda_{w_j}^2 \int_0^\infty
s^{n} \D\mu_{w_j}(s)
   \\
&= \frac{1}{\zeta} \sum_{j=0}^\infty
\frac{\int_0^\infty s s^{n-1}
\D\mu_{w_j}(s)}{\vartheta_j \int_0^\infty s^{n-1}
\D\mu_{w_j}(s)}
   \\
&\overset{(\star)}\Ge \frac{1}{\zeta}
\sum_{j=0}^\infty \frac{\vartheta_j \int_0^\infty
s^{n-1} \D\mu_{w_j}(s)}{\vartheta_j \int_0^\infty
s^{n-1} \D\mu_{w_j}(s)} = \infty,
   \end{align*}
where $(\star)$ follows from the fact that the closed
support of $\mu_{w_j}$ is contained in
$[\vartheta_j,\infty)$ for every $j\in \zbb_+$. This
completes the proof.
   \end{proof}
   \begin{rem}
Regarding the proof of Lemma \ref{sq4}, it is worth
pointing out that we can define the sequence
$\{\tilde\lambda_{w_j}\}_{j=0}^\infty \subseteq
(0,\infty)$ using a more general formula
   \begin{align*}
\tilde\lambda_{w_j} = \frac{1}{\sqrt{\delta_j
\int_0^\infty s^{n-1} \D\mu_{w_j}(s)}}, \quad j\in
\zbb_+,
   \end{align*}
where $\{\delta_j\}_{j=0}^\infty \subseteq (0,\infty)$
and $\{\vartheta_j\}_{j=0}^\infty \subseteq
[1,\infty)$ are such that
   \begin{align*}
\vartheta_0=1, \quad \sum_{j=0}^\infty
\frac{1}{\delta_j} < \infty \quad \text{and} \quad
\sum_{j=0}^\infty \frac{\vartheta_j}{\delta_j} =
\infty.
   \end{align*}
   \end{rem}
   \smallskip
   \begin{proof}[The final stage of the proof of Theorem \ref{main}]
If $\tcal$ has a root, then we can apply Lemma
\ref{sq3} (with $w=\koo$ and $\vartheta=1$) and then
Lemma \ref{ddn} and Theorem \ref{wsi}.

Now assume that the directed tree $\tcal$ is rootless.
Take $w_0 \in V$ and note that $V =
\bigcup_{j=0}^\infty \des{\paa^j(w_0)}$ (cf.\
\cite[Proposition 2.1.6]{j-j-s}). Applying induction
and Lemma \ref{sq4} successively to $w=\paa^j(w_0)$,
we get systems $\{\lambda_v\}_{v\in V} \subseteq
(0,\infty)$ and $\{\mu_v\}_{v\in V} \subseteq
\pscr{1}$ which satisfy \eqref{consist7} and
\eqref{prop2} for all $u\in V$. Finally, employing
Lemma \ref{ddn} and Theorem \ref{wsi} completes the
proof.
   \end{proof}
   \subsection*{Acknowledgement} A substantial part
of this paper was written while the first, the second
and the fourth author visited Kyungpook National
University during the autumns of 2013 and 2014. They
wish to thank the faculty and the administration of
this unit for their warm hospitality.
   
   \end{document}